\def\qed{\hfill $\square$}
\journal{}
\theoremstyle{definition}
\newtheorem{definition}{Definition}[section]
\newtheorem{theorem}{Theorem}[section]
\newtheorem{cor}{Corollary}[section]
\newtheorem{lem}{Lemma}[section] 
\newtheorem{rem}{Remark}[section]
\newtheorem{ex}{Example}[section]
\newproof{pf}{Proof}
\newcommand{\eps}{\varepsilon}
\begin{document}
\begin{frontmatter}

\title{Global stability of almost periodic solutions of monotone sweeping processes and their response to non-monotone perturbations}

\author[mymainaddress]{Mikhail Kamenskii}

\author[mysecondaryaddress]{Oleg Makarenkov\corref{mycorrespondingauthor}}
\cortext[mycorrespondingauthor]{Corresponding author}
\ead{oxm130230@utdallas.edu}

\author[mysecondaryaddress]{Lakmi Niwanthi}

\author[mytertiaryaddress]{Paul Raynaud de Fitte}

\address[mymainaddress]{Department of Mathematics, Voronezh State University, Voronezh, Russia}
\address[mysecondaryaddress]{Department of Mathematical Sciences, University of Texas at Dallas, 75080 Richardson, USA}
\address[mytertiaryaddress]{Normandie University, Laboratoire Raphael Salem, UMR CNRS 6085, Rouen, France}

\address{
}
\begin{abstract} 
We develop a theory which allows making qualitative conclusions about the dynamics of both monotone and non-monotone Moreau sweeping processes. Specifically, we first prove that any sweeping processes with almost periodic monotone right-hand-sides admits a globally exponentially stable almost periodic solution. And then we describe the extent to which such a globally stable solution persists under non-monotone perturbations.
\end{abstract}
\begin{keyword}  sweeping process \sep global stability \sep almost periodic solution \sep averaging \sep non-monotone perturbations
\MSC 34A60 \sep 34C27 \sep 34D23  \sep 34C29  
\end{keyword}
\end{frontmatter}
\section{Introduction}\label{sec:int} A perturbed Moreau sweeping process reads as
\begin{equation}\label{1}
   -\dot x(t)\in N_{C(t)}(x(t))+f(t,x(t)),
\end{equation}
where $N_C(x)$ is the outward normal cone 	
\begin{equation}\label{NC}
  N_C(x)=\left\{\begin{array}{ll}\left\{\xi\in\mathbb{R}^n:\left<\xi,c-x\right>\le 0,\ {\rm for\ any }\ c\in C\right\},& {\rm if}\ x\in C,\\
   \emptyset,& {\rm if}\ x\not\in C.
\end{array}\right.
\end{equation}
and $f:\mathbb{R}\times\mathbb{R}^n\times\mathbb{R}\to\mathbb{R}^n$  (see \cite{castaing,kunze,km,existence}). The unboundedness of the right-hand-sides in (\ref{1}) makes the classical theory of differential inclusions (see e.g. \cite{celina,koz}) inapplicable. 
 And despite numerous applications in elastoplasticity (see e.g. \cite{lam1,lam2}) (as well as in problems of power converters \cite{hybrid} and crowd motion \cite{crowd}), the theory of Moreau differential inclusions (also called {\it sweeping processes}) is  still in its infancy. Fundamental results on the existence, uniqueness and dependence of solutions on the initial data are proposed in Monteiro Marques \cite[Ch.~3]{mont}, Valadier \cite{valadier1}, Castaing and Monteiro Marques  \cite{castaing}, Adly-Le \cite{adly}, Brogliato-Thibault \cite{brogliato0}, Krejci-Roche \cite{roche}, Paoli \cite{paoli}. 
Dependence of solutions on parameters is covered in Bernicot-Venel \cite{venel0} and Kamenskiy-Makarenkov \cite{km}. The papers \cite{km,castaing} also show the existence of $T$-periodic solutions for $T$-periodic in time (\ref{1}).  Optimal control problems for sweeping process (\ref{1}) and equivalent differential equations with hysteresis operator are addressed in Edmond-Thibault \cite{edm}, Adam-Outrata \cite{adam}  (which also discusses applications to game theory),
Brokate-Krejci \cite{brokate}. Numerical schemes to compute the solutions of (\ref{1}) are discussed through most of the papers mentioned above.

\vskip0.2cm

\noindent Much less is known about the asymptotic behavior as $t\to\infty$. The known results in this direction are due to Leine and van de Wouw \cite{leine,leine2}, Brogliato \cite{brogliato1}, and Brogliato-Heemels  \cite{brogliato2}. Applied to a time-independent sweeping process (\ref{1}) the statements of
\cite[Theorem~8.7]{leine} (or \cite[Theorem~2]{leine2}),
 \cite[Lemma~2]{brogliato1}, and \cite[Theorem~4.4]{brogliato2} imply the incremental stability and global exponential stability of an equilibrium, provided that
  \begin{equation}\label{monot}
    \langle f(t,x_1)-f(t,x_2),x_1-x_2 \rangle  \geq \alpha \lVert x_1-x_2 \lVert ^2,\quad\mbox{for some fixed}\ \alpha>0\ \mbox{and for all}\ t\in\mathbb{R},\ x_1,x_2\in\mathbb{R}^n.
  \end{equation}
In particular, the results of \cite{leine,leine2,brogliato1,brogliato2} do not impose any Lipschitz regularity on  $x\mapsto f(t,x)$ and the derivative in (\ref{1}) is a differential measure, which is capable to deal with solutions $x$ of bounded variation.

\vskip0.2cm

\noindent This paper is motivated by sweeping processes (\ref{1}) coming from models of parallel networks of elastoplastic springs (see e.g. Bastein et al \cite{lam1,lam2}), where the right-hand-sides are Lipschitz in all the variables. 
 Here $C(t)$ represents the mechanical loading of the springs and $f(t,x)$ stands for those forces which influence the masses of nodes. 
Time-periodically changing $C$ and $f$ are most typical in laboratory experiments 
 (see \cite{20,exp2,lam1}). However, the different nature of
$t\mapsto C(t)$ and $t\mapsto f(t,x)$
 makes it most reasonable to not rely on the existence of a common period when the two functions 
 receive periodic excitations, but rather to use a theory which is capable to deal with arbitrary different periods of $t\mapsto C(t)$ and $t\mapsto f(t,x)$. The goal of this paper is to develop such a theory.

\vskip0.2cm

\noindent  Specifically, by assuming that both 
$t\mapsto C(t)$ and $t\mapsto f(t,x)$ are almost periodic, we establish global exponential stability of an almost periodic solution to a monotone sweeping process (\ref{1eps}). The corresponding theory for differential equations is available e.g. in Trubnikov-Perov \cite{trub} and Zhao \cite{jde}, that found numerous applications in biology. Moreover, we show that the almost periodic solution found preserves its stability under a wide class of non-monotone  perturbations, which is known for differential inclusions with bounded right-hand-sides e.g. from Kloeden-Kozyakin \cite{kloeden1} and Plotnikov \cite{plot}.

\vskip0.2cm

\noindent The paper is organized as follows. Section 2 establishes (Theorem~\ref{thm1}) the existence of a  solutions to (\ref{1}) defined on the entire $\mathbb{R}$ under the assumption that both $t\mapsto C(t)$ and $t\mapsto f(t,x)$ are uniformly bounded Lipschitz functions, but without any use of the monotonicity assumption (\ref{monot}). Note, that for any solution to (\ref{1}), $x(t)\in C(t)$, so any solution to (\ref{1}) is uniformly bounded in the domain of its definition. When the monotonicity 
assumption (\ref{monot}) holds, we have (Theorem~\ref{thm11}) the uniqueness and global exponential stability of a solution defined on the entire $\mathbb{R}.$ 
This result doesn't follow from 
\cite{brogliato1,brogliato2}, where the existence of an equilibrium follows from the particular structure of the right-hand-sides. When both $C(t)$ and $f(t,x)$ are constant in $t$, the existence of an equilibrium to (\ref{1}) formally follows from \cite{leine,leine2} which could transform into a  solution on $\mathbb{R}$ when  $C(t)$ and $f(t,x)$ get time-varying and uniformly bounded. We provide an independent proof because the proofs of  
\cite[Theorem~8.7]{leine} and \cite[Lemma~2]{leine2} rely on Yakubovich \cite[Lemma~2]{yak}. In turn, \cite[Lemma~2]{yak} sends the reader to Budak \cite[Theorem~2]{budak} for the most crucial step of the proof, which is compactness of a sequence $\{x_k\}_{k=1}^\infty$ of $C^0(\mathbb{R},\mathbb{R}^n)$ solutions of (\ref{1}) corresponding to a converging sequence of initial conditions. Even if one ignores verifying the regularity assumption of Budak \cite[Theorem~2]{budak}, this theorem provides a convergent subsequence on finite interval and Yakubovich \cite[Lemma~2]{yak} doesn't explain how the convergence gets extended to the entire $\mathbb{R}.$

\vskip0.2cm

\noindent Under the assumption that both $t\mapsto C(t)$ and $t\mapsto f(t,x)$ are almost periodic functions and $x\mapsto f(t,x)$ is monotone in the sense of (\ref{monot}), Section~3 shows (Theorem~\ref{thm2}) that the unique solution defined in Section~2 on the entire $\mathbb{R}$ is almost periodic. Here we follow Vesely \cite{vesely} to introduce the concept of almost periodicity for set-valued functions and for the respective Bochner's theorem. The results of \cite{vesely} are developed for functions with values in an arbitrary complete metric space and we take advantage of the completeness of the space of convex closed non-empty sets equipped with the Hausdorff metric (see e.g. Price \cite{price}) to apply Vesely's theory to sweeping processes.
The overall strategy of section~3 originates from the corresponding theory available for differential equations (see e.g. Trubnikov-Perov \cite{trub}).

\vskip0.2cm

\noindent Section~4 considers the sweeping process (\ref{1}) with a parameter $\eps$ under the assumption that the monotonicity condition (\ref{monot}) and almost periodicity of $C$ and $f$ only hold for $\eps=\eps_0$. When $\eps=\eps_0$, the sweeping process has an unique almost periodic solution $x_0$ by Theorem~\ref{thm2}. 
The result of section~4 (Theorems~\ref{thm4} and \ref{icthm}) proves that the solutions to the perturbed sweeping process with $\eps\not=\eps_0$ and with an initial condition $x_\eps(0)\in C(0)$ approach any given inflation of the solution $x_0$ (as it is termed in Kloeden-Kozyakin \cite{kloeden1}) when the values of time become large and when $\eps$ approaches $\eps_0$. Instructive examples of Section~\ref{examples} illustrate the domains of applications of Theorems~\ref{thm4} and \ref{icthm}. Finally, Section~\ref{newsec} gives a brief outlook about the potential role of Theorems~\ref{thm4} and \ref{icthm} in the analysis of the dynamics of networks of elastoplastic springs that motivated our study.
	
\vskip0.2cm

\noindent We note that condition (\ref{monot}) ensures that the sweeping process (\ref{1}) is incrementally stable (see  \cite[Theorem~8.7]{leine}, \cite[Lemma~2]{leine2}, or Theorem~\ref{thm11} below), which concept currently attracts an increasing attention in the switched systems literature, see e.g. Lu-di Bernardo \cite{switch1}, Zamani-van de Wouw-Majumdar \cite{switch2} and references therein. The source for incremental stability in the later papers lies, however, in contraction properties of the right-hand-sides (due to Demidovich, see \cite[Ch.~IV, \S16]{dem} and \cite{pavlov}), while the monotonicity property (\ref{monot}) ensures expansion.

\section{The existence of an unique globally exponentially stable bounded solution $x_0$}

\noindent Let $f:\mathbb{R}\times\mathbb{R}^n\to\mathbb{R}^n$ be globally Lipschitz continuous in the sense that 
\begin{equation}\label{Lipf}
  \|f(t_1,x_1)-f(t_2,x_2)\|\le L_f\|t_1-t_2\|+L_f\|x_1-x_2\|,\qquad {\rm for\ all\ }t_1,t_2\in\mathbb{R},\ x_1,x_2\in\mathbb{R}^n, \mbox{ and for some }L_f>0,
\end{equation}
A similar property 
\begin{equation}\label{Lip}
  d_H(C(t_1),C(t_2))\le L_C|t_1-t_2|,\qquad \mbox{for all}\ t_1,t_2\in\mathbb{R}, \mbox{ and for some }L_C>0,
\end{equation}
is assumed for the closed convex-valued function $t\mapsto C(t)$, where  the Hausdorff distance $d_H(C_1,C_2)$ between two closed sets $C_1,C_2 \subset \mathbb{R}^n$ is defined as
\begin{equation}\label{dH}
   d_H(C_1,C_2)=\max\left\{  
\sup_{x\in C_2} {\rm dist}(x,C_1),\sup_{x\in C_1} {\rm dist}(x,C_2)
\right\}\quad{\rm with}\quad {\rm dist}(x,C)=\inf\left\{
|x-c|:c\in C
\right\}.
\end{equation}
Under conditions (\ref{Lipf}) and (\ref{Lip}), for any initial condition $x(t_0)\in C(t_0)$,  the sweeping process (\ref{1}) with nonempty, closed and convex $C(t),$ $t\in\mathbb{R},$ admits (Edmond-Thibault \cite[Theorem~1]{existence}) a unique absolutely continuous forward solution $x(t)$, in the sense that $x(t)$ satisfies (\ref{1}) for almost all $t\ge t_0.$ 
\begin{rem} If $x_0$ is a solution to (\ref{1}) defined on $t\ge t_0$,  then $x(t)\in C(t),$ for all $t\ge t_0,$ because $N_{C(t)}(x(t))$ is undefined otherwise  (the interested reader can see that \cite{existence} obtains the solution $x(t)$  as $x(t)=y(t)-\psi(t)$, where $y(t)\in C(t)+\psi(t)$ \cite[pp.~352--353]{existence}). In particular, if 
$\|C(t)\|\le M$ for some $M>0$ and all $t\in\mathbb{R}$, then 
\begin{equation}\label{M}
    \|x(t)\|\le M,\quad{\rm  for\ any\ solution\ }x\ {\rm of\ }(\ref{1}){\rm\ with\ the\ initial\ condition\ }x(t_0)\in C(t_0){\rm \ and\ }t\ge t_0.   
\end{equation}  
\end{rem}

\begin{theorem}\label{thm1} Let $f:\mathbb{R}\times\mathbb{R}^n\times\mathbb{R}\to\mathbb{R}^n$ satisfy the Lipschitz condition (\ref{Lipf}). Assume that, for any $t\in\mathbb{R}$, the set $C(t)\subset\mathbb{R}^n$ is nonempty, closed, convex and the map $t\mapsto C(t)$ satisfies the Lipschitz condition (\ref{Lip}). 
If  $C$ is globally bounded, then the sweeping process (\ref{1}) admits at least one absolutely continuous solution $x_0$ defined on the entire $\mathbb{R}$. 
\end{theorem}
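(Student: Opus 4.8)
The plan is to obtain the solution on $\mathbb{R}$ as a limit of forward solutions started at times $t_0 = -k$, $k\in\mathbb{N}$, using the Arzelà–Ascoli theorem together with a diagonal extraction over an exhausting sequence of compact time intervals. First I would fix an arbitrary admissible initial point: since each $C(-k)$ is nonempty, pick $x_k(-k)\in C(-k)$, and let $x_k$ denote the unique absolutely continuous forward solution of (\ref{1}) on $[-k,\infty)$ guaranteed by \cite[Theorem~1]{existence}. By the Remark preceding the theorem (inequality (\ref{M})), all these solutions satisfy $\|x_k(t)\|\le M$ for $t\ge -k$, where $M$ is a global bound for $C$; so the sequence is uniformly bounded on every fixed interval once $k$ is large enough.

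The second step is equi-Lipschitz continuity. Here I would use the standard catching-up / velocity estimate for sweeping processes: because $-\dot x_k(t)\in N_{C(t)}(x_k(t))+f(t,x_k(t))$ and $x_k(t)\in C(t)$, one has $\|\dot x_k(t)\| \le \|v(t)\| + 2\|f(t,x_k(t))\|$ (or a similar bound) where $\|v(t)\|$ controls the "speed" of the moving set $C(t)$, which by the Lipschitz hypothesis (\ref{Lip}) is bounded by $L_C$ for a.e.\ $t$; and $\|f(t,x_k(t))\|$ is bounded on any compact interval because $f$ is Lipschitz (\ref{Lipf}) and $x_k$ is uniformly bounded by $M$. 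Concretely, $\|f(t,x_k(t))\| \le \|f(0,0)\| + L_f|t| + L_f M$, which is bounded on each compact interval. Hence there is a constant $K_J$ (depending only on the compact interval $J$) with $\|\dot x_k(t)\|\le K_J$ for a.e.\ $t\in J$ and all large $k$, so $\{x_k\}$ is equi-Lipschitz, hence equicontinuous, on $J$. I would cite here the a priori velocity bound that is already implicit in the construction of \cite{existence}, or reprove it in a line from the defining inclusion.

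The third step is the compactness/limit extraction. On $J_m=[-m,m]$, Arzelà–Ascoli yields a uniformly convergent subsequence; a Cantor diagonal argument over $m\to\infty$ produces a single subsequence $x_{k_j}$ converging uniformly on every compact subset of $\mathbb{R}$ to a limit function $x_0$, which inherits the Lipschitz bound $K_J$ on each $J$ and is therefore absolutely continuous on $\mathbb{R}$. The derivatives $\dot x_{k_j}$, being bounded in $L^\infty(J)$, converge weakly-$*$ (along a further subsequence, absorbed into the diagonal) to $\dot x_0$ in $L^2(J)$ for each $J$.

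The last and only delicate step is passing to the limit in the differential inclusion to show $x_0$ solves (\ref{1}) on all of $\mathbb{R}$. The obstacle is that $N_{C(t)}(\cdot)$ is an unbounded, merely upper-semicontinuous set-valued map, so one cannot pass to the limit naively. The standard device is to rewrite the inclusion variationally: $x_k(t)\in C(t)$ and $\langle -\dot x_k(t) - f(t,x_k(t)),\, c - x_k(t)\rangle \le 0$ for all $c\in C(t)$ and a.e.\ $t$. Fixing a compact interval $J$ and a test point $c(t)$ (e.g.\ a measurable selection of $C(t)$, or using that $d_H(C(t),C(s))\le L_C|t-s|$ to build suitable test functions), one integrates this inequality against nonnegative test functions, then uses uniform convergence $x_{k_j}\to x_0$, continuity of $f$, weak convergence of the derivatives, and the fact that the functional $\phi\mapsto \int_J \langle \dot\phi, \phi\rangle$ together with lower semicontinuity of the relevant quadratic term survives weak-$*$ limits (convexity of $C(t)$ is what makes the limiting inequality hold). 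One concludes $\langle -\dot x_0(t) - f(t,x_0(t)),\, c - x_0(t)\rangle\le 0$ for all $c\in C(t)$ and a.e.\ $t\in J$, i.e.\ $-\dot x_0(t)\in N_{C(t)}(x_0(t)) + f(t,x_0(t))$ a.e.\ on $J$; since $J$ was arbitrary, $x_0$ is a solution on $\mathbb{R}$. Alternatively, and perhaps more cleanly, one can avoid reproving the limit passage by invoking the continuous dependence result of Kamenskiy--Makarenkov \cite{km} (or the estimate in \cite{existence}): the solutions $x_k$ restricted to a fixed interval $[-N,\infty)$ with initial data $x_k(-N)\in C(-N)$ form a relatively compact family whose limits are again solutions, which packages exactly the Arzelà–Ascoli plus limit-passage argument above. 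I expect this limit-passage step to be the main technical point; everything else is bookkeeping with the uniform bound (\ref{M}) and the Lipschitz constants $L_f, L_C$.
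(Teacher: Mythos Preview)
Your Steps 1--3 coincide with the paper's Step~1: pick initial data in $C(-k)$, use the uniform Lipschitz bound on solutions (the paper simply cites Edmond--Thibault for a common Lipschitz constant; your interval-dependent bound $K_J$ is fine for Arzel\`a--Ascoli on compacts), and extract diagonally to get a candidate $x_0$.

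The genuine difference is in your Step~4. You propose to pass to the limit in the variational inequality $\langle -\dot x_{k_j}(t)-f(t,x_{k_j}(t)),\,c-x_{k_j}(t)\rangle\le 0$ using weak-$*$ convergence of derivatives and lower semicontinuity. This works, but it is heavier than what the paper does. The paper sidesteps the unbounded-normal-cone limit entirely by exploiting \emph{continuous dependence on initial data} (Edmond--Thibault \cite[Proposition~2]{existence}): fix $\tau\in\mathbb{R}$, let $v$ be the forward solution of (\ref{1}) with $v(\tau)=x_0(\tau)$, and show $v\equiv x_0$ on $[\tau,\infty)$. Indeed, $x_m(\tau)\to x_0(\tau)=v(\tau)$, so by continuous dependence $x_m(t)\to v(t)$ on any $[\tau,t_0]$; but also $x_m(t)\to x_0(t)$, so $x_0=v$. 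Since $\tau$ is arbitrary, $x_0$ is a solution on all of $\mathbb{R}$. This argument converts the limit-passage problem into a uniqueness/continuous-dependence statement that is already proved in the literature, and avoids any discussion of weak convergence, measurable selections of $C(t)$, or lower semicontinuity of quadratic terms. You actually gesture at this alternative in your last paragraph when you mention invoking continuous dependence from \cite{km} or \cite{existence}; that is precisely the route the paper takes, and it is the cleaner of your two options.
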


  \begin{proof} 
 \noindent {\bf Step 1:} {\it Construction of a candidate solution $x_0$ bounded on the entire $\mathbb{R}$}.
Let $\{\xi_m\}_{m=1}^\infty$ be an arbitrary sequence of 
elements of $\mathbb{R}^n$ such that 
$\xi_m\in C(-m)$, $m\in\mathbb{N}.$  Let $x_m(t)$ be 
the solution to (\ref{1}) with the initial condition $x_m(-m)=\xi_m.$ Extend each $x_m$ from $[-m,\infty)$ to $\mathbb{R}$ by defining $x(t)=x(-m)$ for all $t<-m.$ By Thibault, the functions of $\{x_m(t)\}_{m=1}^\infty$ share same Lipschitz constant $L_0>0$. Therefore, for each $k\in\mathbb{N}$ we can extract a subsequence $\{x_m^k(t)\}_{m=1}^\infty$ which converges uniformly on $[-k,k]$. By using these family of subsequence we introduce a sequence $\{x_m^*\}_{m=1}^\infty$ by $x_m^*(t)=x_m^m(t).$ The sequence $\{x_m^*\}_{m=1}^\infty$ converges uniformly on any fixed interval $[-k,k],$ $k\in\mathbb{N}.$
Define $x_0(t)$ by $x_0(t)=\lim\limits_{m\to\infty}x_m^*(t).$ The function $x_0:\mathbb{R}\to\mathbb{R}^n$ is Lipschitz continuous with constant $L_0.$

\vskip0.2cm

\noindent {\bf Step 2:} {\it Proof that $x_0$ is indeed a solution}. 
  	Let $\tau\in\mathbb{R}$ and let  $v$ be a solution of (\ref{1}) with $v(\tau)=x_0(\tau)$. Assume $ v(t_0) \neq x_0(t_0)$ for some $t_0 > \tau$, i.e. $ \lim\limits_{m\to\infty} x_m(t_0) \neq v(t_0)$. Then there exists $ \varepsilon_0 > 0 $, such that for each $m\in \mathbb{N}$, there exists $m_n>m$ such that $ \lVert x_{m_n}(t_0) - v(t_0)\lVert \geq \varepsilon_0 $. On the other hand, by continuous dependence of solutions of (\ref{1}) on the initial condition (see Edmond-Thibault \cite[Proposition~2]{existence}), there exists $ \delta > 0$ such that if $ \lVert v(\tau) - x_m(\tau) \lVert < \delta$ then $ \lVert v(t) - x_m(t) \lVert < \varepsilon_0 $ for all $m \in \mathbb{N}$ with $-m<\tau$ (which ensures that $x_m(t)$ is a solution of (\ref{1}) for $t\ge \tau$) and $t\in[\tau,t_0]$. But since $v(\tau)=x_0(\tau) = \lim\limits_{n\to\infty} x_m(\tau) $, there exists $N \in \mathbb{N}$ such that $ \lVert v(\tau) - x_m(\tau) \lVert < \delta $ for each $m>N$. Then $ \lVert v(t) - x_m(t) \lVert < \varepsilon_0 $ for all $m >N$ and $t\in[\tau,t_0]$. This contradicts $ \lim\limits_{n\to\infty} x_m(t_0) \neq v(t_0)$. Therefore $ v(t) = x_0(t)$ for each $t\geq \tau$. Hence $x_0$ is a solution of (\ref{1}).  

\begin{figure}[h]\center
\includegraphics[scale=0.7]{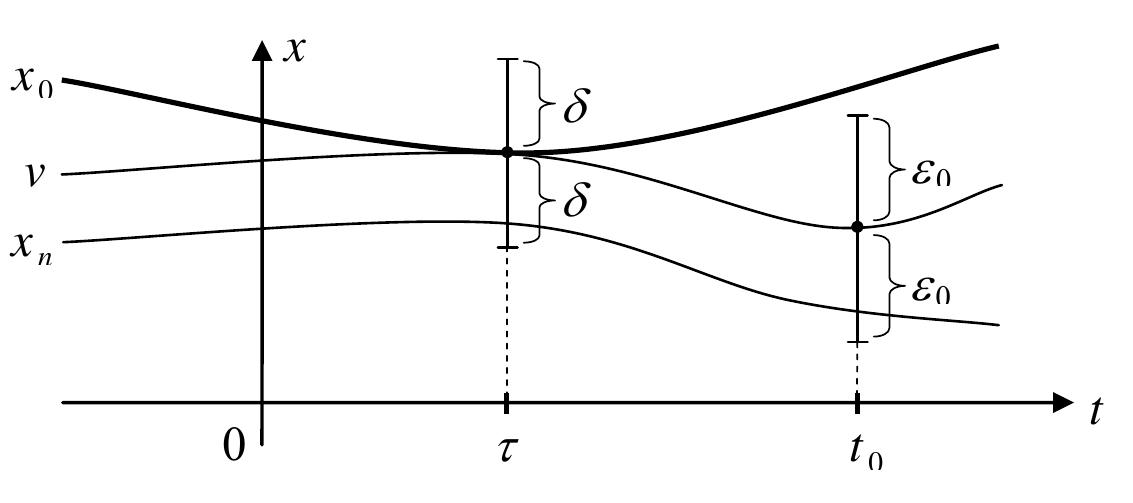}
\vskip-0.4cm
\caption{\footnotesize Illustration of the location of curves $x_0,$ $v,$ and $x_m$.} \label{fig2}
\end{figure}

\end{proof}

\vskip0.2cm

\begin{theorem}\label{thm11} Assume that the conditions of Theorem~\ref{thm1} hold. If  $f$ satisfies the monotonicity condition (\ref{monot}) then (\ref{1}) is incrementally stable and (\ref{1}) admits exactly one absolutely continuous solution $x_0$ defined on the entire $\mathbb{R}.$  Moreover, $x_0$ is globally exponentially stable. 
\end{theorem}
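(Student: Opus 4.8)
The plan is to derive a differential inequality for the squared distance between any two solutions of (\ref{1}) (they share the same moving set $C$ and the same $f$), and then to feed it both a Gronwall argument and the a priori bound (\ref{M}), which yields incremental stability, global exponential stability, and uniqueness on $\mathbb{R}$ simultaneously.

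First I would take two absolutely continuous solutions $x_1,x_2$ of (\ref{1}) on a common interval $[t_0,\infty)$ with $x_i(t_0)\in C(t_0)$. For a.e. $t$ both $\dot x_1(t),\dot x_2(t)$ exist, so there are measurable selections $\xi_i(t)\in N_{C(t)}(x_i(t))$ with $-\dot x_i(t)=\xi_i(t)+f(t,x_i(t))$. Since $x_1(t),x_2(t)\in C(t)$ and $C(t)$ is convex, the definition (\ref{NC}) gives $\langle\xi_1(t),x_2(t)-x_1(t)\rangle\le 0$ and $\langle\xi_2(t),x_1(t)-x_2(t)\rangle\le 0$, whence, adding, $\langle\xi_1(t)-\xi_2(t),x_1(t)-x_2(t)\rangle\ge 0$ — the usual monotonicity of the normal-cone operator of a convex set. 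Because $t\mapsto\|x_1(t)-x_2(t)\|^2$ is absolutely continuous, for a.e. $t$
\[
\tfrac12\tfrac{d}{dt}\|x_1(t)-x_2(t)\|^2=-\langle\xi_1(t)-\xi_2(t),x_1(t)-x_2(t)\rangle-\langle f(t,x_1(t))-f(t,x_2(t)),x_1(t)-x_2(t)\rangle\le-\alpha\|x_1(t)-x_2(t)\|^2,
\]
where the last inequality combines the normal-cone monotonicity just established with (\ref{monot}). Integrating (Gronwall for absolutely continuous functions) gives
\[
\|x_1(t)-x_2(t)\|\le\|x_1(t_0)-x_2(t_0)\|\,e^{-\alpha(t-t_0)},\qquad t\ge t_0,
\]
which is precisely incremental stability; taking one of the two solutions to be the $x_0$ furnished by Theorem~\ref{thm1} and the other to be an arbitrary solution with initial datum in $C(t_0)$ yields the global exponential stability of $x_0$.

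For uniqueness of a solution defined on the entire line, suppose $x_0$ and $\widetilde x_0$ are both defined on $\mathbb{R}$. For any fixed $t$ and any $t_0<t$, the displayed estimate together with the bound (\ref{M}) gives $\|x_0(t)-\widetilde x_0(t)\|\le\|x_0(t_0)-\widetilde x_0(t_0)\|e^{-\alpha(t-t_0)}\le 2M e^{-\alpha(t-t_0)}$; letting $t_0\to-\infty$ forces $x_0(t)=\widetilde x_0(t)$.

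I expect the only delicate point to be the a.e. validity of the energy identity for $\|x_1-x_2\|^2$ when $x_1,x_2$ are merely absolutely continuous, so that the selections $\xi_i$ exist only almost everywhere and the normal-cone inclusions hold only almost everywhere; this is, however, exactly what the notion of solution from Edmond--Thibault \cite{existence} provides, and since the product of absolutely continuous functions is absolutely continuous the integration step is legitimate. Everything else is the standard monotone-operator computation.
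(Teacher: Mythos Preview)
Your proof is correct and follows essentially the same route as the paper's: derive the differential inequality $\tfrac{d}{dt}\|x_1-x_2\|^2\le-2\alpha\|x_1-x_2\|^2$ from the monotonicity of the normal cone and (\ref{monot}), apply Gronwall, and then use the resulting incremental estimate together with the bound (\ref{M}) and $t_0\to-\infty$ for uniqueness and global exponential stability. The only cosmetic difference is that the paper writes out the two normal-cone inequalities separately rather than packaging them as $\langle\xi_1-\xi_2,x_1-x_2\rangle\ge0$.
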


\noindent The incremental stability of (\ref{1}) under condition (\ref{monot}) is proved in \cite[Theorem~8.7]{leine} and \cite[Lemma~2]{leine2}. The statements about uniqueness and global stability of the bounded solution $x_0$ follow from incremental stability. We include a proof of Theorem~\ref{thm11} in Appendix for completeness.

\section{Almost periodicity of the bounded solution $x_0$}

  \begin{definition}
 Let $ck(\mathbb{R}^n)$ be the space of all closed bounded non-empty sets of $\mathbb{R}^n$ equipped with the Hausdorff metric $d_H$, see (\ref{dH}). A continuous function $\phi: \mathbb{R} \to (\mathbb{R}^n,d_H)$ is {\it almost periodic}, if for any $\varepsilon >0$, there exists a number $p(\varepsilon)>0 $ with the property that any interval of length $p(\varepsilon)>0 $ of the real line contains at least one point $s$, such that 
 \begin{equation*}
 d_H(\phi(t+s),\phi(t)) < \varepsilon \text{  for } t\in \mathbb{R}.
 \end{equation*} 
 
   \end{definition}
  
 \begin{theorem}\label{thm2} Let the conditions of  Theorem~\ref{thm1} hold and let $x_0$ be the unique absolutely continuous solution given by Theorem~\ref{thm1}.
If both the function $t\mapsto f(t,x)$ and the set-valued function $t\mapsto C(t)$ are 
almost periodic, then $x_0$ is almost periodic.
\end{theorem}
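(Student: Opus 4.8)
\noindent\emph{Proof sketch.} The plan is to verify the Bohr definition of almost periodicity directly for $x_0$: given $\eps>0$ I will exhibit a relatively dense set of numbers $s$ for which $\|x_0(t+s)-x_0(t)\|<\eps$ for all $t\in\mathbb{R}$. The mechanism is the global exponential stability of $x_0$ supplied by Theorem~\ref{thm11}, combined with the observation that shifting the data of (\ref{1}) in time by a near-period of $C$ and of $f$ produces only a small, \emph{permanent} perturbation of the entire trajectory. As preliminary bounds: an almost periodic function has relatively compact, hence bounded, range, so $\|C(t)\|\le M$ for some $M>0$ and all $t$, whence $\|x_0(t)\|\le M$ by (\ref{M}); and since $t\mapsto f(t,x)$ is bounded for each $x$, the Lipschitz estimate (\ref{Lipf}) gives $F>0$ with $\|f(t,x)\|\le F$ for all $t\in\mathbb{R}$, $\|x\|\le M$. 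It then follows from Edmond--Thibault \cite{existence} that $x_0$ is Lipschitz with some constant $L_0$, i.e.\ $\|\dot x_0(t)\|\le L_0$ a.e. (Upgrading, if necessary, to $t\mapsto f(t,\cdot)$ being almost periodic uniformly on $\|x\|\le M$ is harmless, being forced by (\ref{Lipf}).)

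\noindent\textbf{Step 1: robustness under a time-shift.} Fix $s\in\mathbb{R}$ and set $\delta(s):=\max\{\sup_t d_H(C(t+s),C(t)),\ \sup_{t,\,\|x\|\le M}\|f(t+s,x)-f(t,x)\|\}$. The shifted data satisfy exactly the hypotheses of Theorem~\ref{thm11}, with the same constants and the same $\alpha$, so $y(t):=x_0(t+s)$ is a solution on all of $\mathbb{R}$ of $-\dot y\in N_{C(t+s)}(y)+f(t+s,y)$. I compare $y$ with $x_0$ on an interval $[t_0,t]$: testing the inclusion for $x_0$ against the point of $C(t)$ nearest to $y(t)$, and the inclusion for $y$ against the point of $C(t+s)$ nearest to $x_0(t)$ (these points lie within $\delta(s)$ of $y(t)$, resp.\ $x_0(t)$, since $y(t)\in C(t+s)$ and $x_0(t)\in C(t)$), I discard the normal-cone terms at the cost of an error bounded by $(\|\dot x_0(t)\|+\|\dot y(t)\|+\|f(t,x_0(t))\|+\|f(t+s,y(t))\|)\,\delta(s)\le(2L_0+2F)\delta(s)$, and the monotonicity (\ref{monot}) then yields, a.e.\ on $[t_0,t]$,
\begin{equation*}
\tfrac12\,\tfrac{d}{dt}\|y(t)-x_0(t)\|^2\;\le\;-\alpha\,\|y(t)-x_0(t)\|^2\;+\;\big(2M+2L_0+2F\big)\,\delta(s),
\end{equation*}
where $\|y-x_0\|\le 2M$ was used to absorb the $\|f(t+s,\cdot)-f(t,\cdot)\|$ contribution. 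Gronwall's inequality together with $\|y(t_0)-x_0(t_0)\|\le 2M$ and the limit $t_0\to-\infty$ gives $\|x_0(t+s)-x_0(t)\|^2\le K\,\delta(s)$ for all $t\in\mathbb{R}$, with $K:=(2M+2L_0+2F)/\alpha$.

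\noindent\textbf{Step 2: conclusion via common almost periods.} Given $\eps>0$, choose $\delta>0$ with $K\delta<\eps^2$. The map $t\mapsto\big(C(t),\,f(t,\cdot)|_{\|x\|\le M}\big)$ is almost periodic into the complete metric space $(ck(\mathbb{R}^n),d_H)\times(C(\overline B_M,\mathbb{R}^n),\|\cdot\|_\infty)$, being the pair of two almost periodic maps (here one invokes the Bochner/Vesely criterion \cite{vesely} for the set-valued coordinate and the classical theory for the other); hence its $\delta$-almost periods form a relatively dense set. For any $s$ in that set, $\delta(s)<\delta$, so Step~1 gives $\|x_0(t+s)-x_0(t)\|<\eps$ for every $t$. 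Thus every such $s$ is an $\eps$-almost period of $x_0$, and since $x_0$ is (Lipschitz, hence) continuous, $x_0$ is almost periodic.

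\noindent\textbf{Expected main obstacle.} The delicate point is the differential inequality in Step~1. Because $f$ need not be bounded in $t$ under the bare assumptions of Theorem~\ref{thm1}, one must first secure the uniform bound $F$ on $\|f(t,x)\|$ over $\|x\|\le M$ --- this is precisely where almost periodicity, and not merely global boundedness of $C$, is used --- and then treat the normal-cone terms so that the discarded contribution is controlled by the velocity bound $L_0$ and the Hausdorff gap $\delta(s)$ alone; once that is done, the expansion factor $e^{-2\alpha(t-t_0)}$ produced by (\ref{monot}) makes the passage $t_0\to-\infty$ automatic. The simultaneity-of-almost-periods step is classical, but it is worth stating it for the product space above so that the set-valued datum and the parametrized vector field are handled at once.
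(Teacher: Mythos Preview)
Your argument is correct, and the core differential inequality you derive in Step~1 is essentially identical to the one the paper obtains in its Step~3 (same decomposition of the normal-cone cross term via $C(t+s)\subset C(t)+\bar B_{\delta(s)}(0)$, same use of (\ref{monot}) and of the Edmond--Thibault velocity bound, same passage $t_0\to-\infty$). The genuine difference is architectural: you verify the \emph{Bohr} definition directly, comparing $x_0(\cdot+s)$ with $x_0$ and producing the estimate $\|x_0(\cdot+s)-x_0\|_\infty^2\le K\,\delta(s)$, whereas the paper verifies the \emph{Bochner} criterion, first extracting a subsequence along which the shifted data $(C(\cdot+k_m),f(\cdot+k_m,\cdot))$ converge to limit objects $(\hat C,\hat f)$, then constructing the unique bounded solution $\hat x$ of the limit sweeping process via Theorem~\ref{thm11}, and finally comparing $x_0(\cdot+k_m)$ with $\hat x$. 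Your route is shorter and avoids building the auxiliary process $(\hat C,\hat f,\hat x)$ and re-invoking Theorems~\ref{thm1}--\ref{thm11} for it; the paper's route, on the other hand, makes explicit the limiting dynamics and would extend more readily to settings where one wants to identify the hull of $x_0$. One small point worth tightening in your write-up: the relative density of \emph{common} $\delta$-almost periods of $C$ and of $f(\cdot,\cdot)|_{\bar B_M}$ is exactly where Bochner is needed in your approach too (pass to the product metric space and use that a pair of almost periodic maps is almost periodic), so the two proofs ultimately invoke the same compactness principle, just at different stages.
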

  \begin{proof}  Let $\{h_m\}_{m=1}^\infty \subseteq \mathbb{R}$. We are going to prove that there exists $\{k_m(x)\}_{m=1}^\infty \subseteq \{h_m\}_{m=1}^\infty$ such that the sequence of
\begin{equation}\label{xm}
x_m(t)=x_0(t+k_m),\quad m\in N,\ t\in\mathbb{R},
\end{equation} converges as $m\to\infty$ uniformly in $t\in\mathbb{R}$, which will  imply almost periodicity of $x_0$ by Bochner's theorem (see e.g. Levitan-Zhikov \cite[p.~4]{levitan}). 

\vskip0.2cm

\noindent {\bf Step 1.} {\it The existence of 
$\{l_m\}_{m=1}^\infty \subseteq \{h_m\}_{m=1}^\infty$ such that 
$f_m(t,x)=f(t+l_m,x)$  converges as $m\to\infty$ uniformly}.
  	Since $f(t,x)$ is almost periodic, then, for each $x\in\mathbb{R}^n$,  Bochner's theorem (see e.g. Levitan-Zhikov \cite[p.~4]{levitan}) implies the existence of 
$\{l_m(x)\}_{m=1}^\infty \subseteq \{h_m\}_{m=1}^\infty 
$ such that the sequence of functions $\{f(\cdot+l_m(x),x)\}_{m=1}^\infty$ converges in the sup-norm. The standard diagonal method allows to construct $\{l_m(x)\}_{m=1}^\infty$ independent on $x.$ Indeed, considering $\{x_m\}_{m=1}^\infty=\mathbb{Q}^n$, we first construct  sequences $\{l_m(x_1)\}_{m=1}^\infty\supseteq \{l_m(x_2)\}_{m=1}^\infty\supseteq \ldots$, such that each individual sequence 
$\{f(\cdot+l_m(x_1),x_1)\}_{m=1}^\infty$,
$\{f(\cdot+l_m(x_2),x_2)\}_{m=1}^\infty$, $\ldots$ converges.
And then define $\{l_m\}_{m=1}^\infty\subseteq \{h_m\}_{m=1}^\infty$ as $l_m=l_m(x_m),$ $m\in\mathbb{N}.$ Put 
\begin{equation}\label{fm}
f_m(t,x)=f(t+l_m,x), \quad\mbox{for all}\ t\in\mathbb{R},\ x\in\mathbb{Q}^n,\ m\in\mathbb{N}.
\end{equation}
So constructed, $\{f_m(\cdot,x)\}_{m=1}^\infty$ converges for each fixed $x\in\mathbb{Q}^n$. Let
\begin{equation}\label{let}
  \hat f(t,x)=\lim\limits_{m\to\infty}f_m(t,x),\quad\mbox{for all}\ t\in\mathbb{R},\ x\in\mathbb{Q}^n.
\end{equation}
By (\ref{Lipf}) both $f_m$ and $\hat f$ are Lipschitz continuous with constant $L_f$ on 
$\mathbb{R}\times\mathbb{R}^n$ and $\mathbb{R}\times\mathbb{Q}^n$ respectively. Now we extend $\hat f$ from $\mathbb{R}\times\mathbb{Q}^n$ to $\mathbb{R}\times\mathbb{R}^n$ by taking an arbitrary sequence $\mathbb{Q}\ni x_k\to x_0\in\mathbb{R}$, as $k\to\infty,$ and defining $\hat f(t,x_0)=\lim\limits_{k\to\infty}\hat f(t,x_k).$ The limit exists because $\{\hat f(t,x_k)\}_{k=1}^\infty$ is a Cauchy sequence for each fixed $t\in\mathbb{R}$, which follows from Lipschitz continuity of $\hat f$ on $\mathbb{R}\times\mathbb{Q}^n$. Lipschitz continuity of $\hat f$ extends from $\mathbb{R}\times\mathbb{Q}^n$ to $\mathbb{R}\times\mathbb{R}^n$ by continuity. 
The latter property also implies that 
$$
  \left\|\hat f(t,x_0)-\hat f(t,x_k)\right\|\le L_f\|x_0-x_k\|,\quad\mbox{for all}\ k\in\mathbb{N}.
$$
Finally, to show that
\begin{equation}\label{fconverge}
   f_m(t,x)\to \hat f(t,x)\quad{\rm as}\ m\to\infty,\ \mbox{uniformly in }t\in\mathbb{R},\ x\in\mathbb{R}^n,
\end{equation}
we estimate $f_m(t,x)-\hat f(t,x)$ as
$$
  \left\|f_m(t,x)-\hat f(t,x)\right\|\le\left\|f_m(t,x)-f_m(t,x_*)\right\|+\left\|f_m(t,x_*)-\hat f(t,x_*)\right\|+\left\|\hat f(t,x_*)-\hat f(t,x)\right\|.
$$
Given $x\in\mathbb{R}$ and $\eps>0$, we choose $x_*\in\mathbb{Q}$ so close to $x$ that $\left\|f_m(t,x)-f_m(t,x_*)\right\|<\eps/3$ and $\left\|\hat f(t,x_*)-\hat f(t,x)\right\|<\eps/3$, for all $m\in\mathbb{N},$ $t\in\mathbb{R}.$ By (\ref{let}) we can now select $m_0\in\mathbb{N}$ such that $\left\|f_m(t,x_*)-\hat f(t,x_*)\right\|< \eps/3$, for all $m>m_0$ and $t\in\mathbb{R}.$ Thus, (\ref{fconverge}) holds.

\vskip0.2cm

\noindent {\bf Step 2.}  {\it The existence of 
$\{k_m\}_{m=1}^\infty \subseteq \{l_m\}_{m=1}^\infty$, such that 
$C_m(t)=C(t+k_m)$ converges as $m\to\infty$ uniformly}. By Bochner's theorem for almost periodic functions in pseudo-metric spaces (see \cite[Theorem~2.4]{vesely}), there exists $\{k_m\}_{m=1}^\infty \subseteq \{l_m\}_{m=1}^\infty$, such that $\{C_m(t)\}_{m=1}^\infty$ is a Cauchy sequence 
in $ck(\mathbb{R}^n)$, which is uniform in $t\in\mathbb{R}.$ The convergence 
of $\{ C_m(t)\}_{m=1}^\infty$ for each individual $t\in\mathbb{R}$ now follows from the completeness of $ck(\mathbb{R}^n)$ 
(Price \cite[the theorem of \S3]{price}). The uniformity of the convergence in $t\in\mathbb{R}$ follows along the standard lines. Indeed, let
$$
  \hat{C}(t)=\lim\limits_{m\to\infty} C_m(t). 
$$
Given $\eps>0$, 
fix $m_0>0$ such that $d_H(C_{m}(t),C_{m_*}(t))<\eps/2$ for all $m>m_0,$ $m_*>m_0$, and $t\in\mathbb{R}.$ For each $t\in\mathbb{R}$ select  
$m_*(t)>m_0$ such that $d_H\left(C_{m_*(t)}(t),\hat C(t)\right)<\eps/2.$ Then
$$
   d_H\left(C_m(t),\hat C(t)\right)\le d_H\left(C_m(t),C_{m_*(t)}(t)\right)+d_H\left(C_{m_*(t)}(t),\hat C(t)\right)<\eps/2+\eps/2=\eps,\quad \mbox{for all } \ m>m_0,\ t\in\mathbb{R}.
$$
Note that (\ref{Lip}) implies that  $\hat{C}$ is globally Lipschitz continuous with constant $L_C$. 

\vskip0.2cm

\noindent {\bf Step 3:} {\it The uniform convergence of $\{x_m(t)\}_{m=1}^\infty$}. 
  	The function $x_m$, see (\ref{xm}),  is a solution to the sweeping process
  \begin{equation}\label{sp1}
  -\dot{x}(t) \in N_{C_m(t)}(x(t))+f_m(t,x(t)).
  \end{equation} 
Along with (\ref{sp1}) let us consider
  	\begin{equation}\label{sp2}
  	-\dot{x}(t) \in N_{\hat{C}(t)}(x(t))+\hat{f}(t,x(t)).
  	\end{equation}  
Both $\hat C$ and $\hat f$ are globally bounded  and globally Lipschitz continuous. Moreover, by using  (\ref{fm}) and (\ref{let}) one concludes that $\hat f$ satisfies the monotonicity property (\ref{monot}). Therefore, by Theorem~\ref{thm1} the sweeping process (\ref{sp2}) has a unique bounded absolutely continuous solution $\hat x$ defined on the entire $\mathbb{R}.$
 	Let $t\in \mathbb{R}$ be such that both $ \dot{x}_m(t) $ and $ \dot{\hat{x}}(t) $ exist and satisfy the respective relations (\ref{sp1}) and (\ref{sp2}). Define 
$$v_m=\dot{x}_m(t) + f_m(t,x_m(t)),\ \hat v=\dot{\hat x}(t) + \hat f(t,\hat x(t)), \mbox{ so that }v_m\in -N_{C_m(t)}(x_m(t)),\ \hat{v}\in -N_{\hat{C}(t)}(\hat{x}(t)).$$
Furthermore, introducing $\Delta_m(t) = d_H\left(C_m(t),\hat{C}(t)\right)$ one has
$$
  	u_m(t)\in C_m(t) \subseteq \hat{C}(t) + \bar{B}_{\Delta_m(t)}(0),\ \ \hat{u}(t) \in \hat{C}(t) \subseteq C_m(t) + \bar{B}_{\Delta_m(t)}(0),\quad{\rm for\ all }\ t\in \mathbb{R}.$$
Therefore, $x_m$ and $\hat x$ can be decomposed as
$$
  x_m(t)=\hat d(t)+s_m(t),\ \ \hat x(t)=d_m(t)+\hat s(t),\quad{\rm where}\ 
\hat{d}(t) \in \hat{C}(t),\ d_m(t) \in C_m(t),\ 
\lVert s_m(t) \lVert\leq \Delta_m(t),\ \lVert \hat{s}(t) \lVert \leq \Delta_m(t). 
$$
Let $$w_m(t)=\lVert x_m(t)-\hat{x}(t) \lVert ^2.$$ Then,
  	\begin{eqnarray*}
  	\frac{1}{2} \dot{w}_m(t) 
  	&=& \langle \dot{x}_m(t)-\dot{\hat{x}}(t) , x_m(t)-\hat{x}(t)\rangle  \\
  	&=&\langle  v_m(t)-f_m(t,x_m(t))-\hat{v}(t)+\hat{f}(t,\hat{x}(t)) , x_m(t)-\hat{x}(t) \rangle \\	
   &=&\langle  v_m(t) , x_m(t)-d_m(t) - \hat{s}(t) \rangle + \langle \hat{v}(t) , \hat{x}(t)-\hat{d}(t) - s_m(t) \rangle  - \langle f_m(t,x_m(t))-\hat{f}(t,\hat{x}(t)) , x_m(t)-\hat{x}(t)\rangle 
	\end{eqnarray*}
By (\ref{NC}) we have $\langle  v_m(t) , x_m(t)-d_m(t) \rangle\le 0$ and $\langle  \hat v(t) , \hat x(t)-\hat d(t) \rangle\le 0$. Therefore, for a.a. $t\in\mathbb{R},$ 
  	\begin{eqnarray*}
  	\frac{1}{2} \dot{w}_m(t) 
   &\leq& - \langle  v_m(t) , \hat{s}(t) \rangle  -\langle \hat{v}(t) , s_m(t)\rangle - \langle f_m(t,x_m(t))-\hat{f}(t,\hat{x}(t)) , x_m(t)-\hat{x}(t)\rangle \\
  	&\leq& \lVert v_m(t)\lVert\cdot \lVert \hat{s}(t)\lVert + \lVert \hat{v}(t)\lVert\cdot  \lVert s_m(t)\lVert - \langle f_m(t,x_m(t))-f_m(t,\hat x(t))+f_m(t,\hat x(t))-\hat{f}(t,\hat{x}(t)) , x_m(t)-\hat{x}(t)\rangle.
\end{eqnarray*}
Given $\eps>0$ we use the conclusions of Steps 1 and 2 to spot an $m_0>0$ such that 
$$
   \|\hat s(t)\|\le \eps_0,\    \|s_m(t)\|\le \eps_0,\ \left\|f_m(t,\hat x(t))-\hat f(t,\hat x(t))\right\|\le\eps_0,\quad\mbox{for all}\ m\ge m_0,\ t\in\mathbb{R}^n.
$$
By Edmond-Thibault \cite[Theorem~1]{existence}, there exists $L_0>0$ such that
$$
   \|v_m(t)\|\le L_0,\ \|\hat v(t)\|\le L_0
$$
and by using (\ref{M}) we can estimate $\dot w_m(t)$ further as
\begin{eqnarray*}
\dfrac{1}{2}\dot w_m(t)&\le&  2\eps L_0-\left<f_m(t,x_m(t))-f_m(t,\hat x(t)),x_m(t)-\hat x(t)\right>+2\eps M,\quad\mbox{for all }m\ge m_0,\ a.a.\ t\in\mathbb{R}.
\end{eqnarray*}
By referring to the definition (\ref{fm}) of $f_m$, one observes that $f_m$ satisfies the monotonicity estimate (\ref{monot}), which implies
\begin{eqnarray*}
\dfrac{1}{2}\dot w_m(t)&\le&  2\eps (L_0+M)-\alpha\|x_m(t)-\hat x(t)\|^2=2\eps (L_0+M)-\alpha w_m(t),\quad\mbox{for all }m\ge m_0\ {\rm and \ a.a.\ }t\in\mathbb{R}.
\end{eqnarray*}
Gronwall-Bellman lemma (see Lemma~\ref{trub} in the Appendix) now allows to conclude that $$w_m(t)\leq w_m(\tau) e^{-\alpha (t-\tau)} + 2\eps(L_0+M)\int_{\tau}^{t} e^{-\alpha (t-s)}  ds=w_m(\tau) e^{-\alpha (t-\tau)} + \eps\dfrac{2(L_0+M)}{\alpha}\left(1-e^{-\alpha(t-\tau)}\right),\ \ t,\tau\in\mathbb{R},\ m\ge m_0. $$
By passing to the limit as $\tau\to-\infty$ one gets
$$	
  w_m(t)\le \eps \cdot 2(L_0+M)/\alpha,\quad{t\in\mathbb{R}},\ m\ge m_0.
$$
Therefore, $\lVert x_m(t)-\hat{x}(t) \lVert\to 0$ as $m\to\infty$ uniformly in $t\in\mathbb{R},$ and so $x_0$ is almost periodic by Bochner's theorem.
  \end{proof}

\section{Stability of the attractor to non-monotone perturbations}

\noindent In this section we study the sweeping process
\begin{equation}\label{1eps}
   -\dot x(t)\in N_{C(t)}(x(t))+f(t,x(t),\eps),
\end{equation}
which satisfies the monotonicity condition (\ref{monot}) only when $\eps=\eps_0,$ i.e.
\begin{equation}\label{monot0}
      \langle f(t,x_1,\eps_0)-f(t,x_2,\eps_0),x_1-x_2 \rangle  \geq \alpha \lVert x_1-x_2 \lVert ^2,\ \ \mbox{for some fixed}\ \alpha>0\ \mbox{and for all}\ t\in\mathbb{R},\ x_1,x_2\in\mathbb{R}^n.
\end{equation}

\subsection{The case where the dependence of the perturbation on the parameter $\eps$ is continuous}

\noindent Here we assume that
\begin{equation}\label{Lip4}
  \|f(t_1,x_1,\eps)-f(t_2,x_2,\eps)\|\le L_f\|t_1-t_2\|+L_f\|x_1-x_2\|, \quad {\rm for\ all\ }t_1,t_2\in\mathbb{R},\ x_1,x_2\in\mathbb{R}^n.
\end{equation}

\begin{theorem}\label{thm4} Let $f:\mathbb{R}\times\mathbb{R}^n\times\mathbb{R}\to\mathbb{R}^n$ satisfy the Lipschitz condition  (\ref{Lip4}) and the monotonicity condition (\ref{monot0}).
Assume that, for any $t\in\mathbb{R}$, the set $C(t)\subset\mathbb{R}^n$ is nonempty, closed, convex and the uniformly bounded map $t\mapsto C(t)$ satisfies the Lipschitz condition (\ref{Lip}).  Finally, assume that  $f(t,x,\eps)$ is continuous at $\eps=\eps_0$ uniformly in $t\in\mathbb{R},$ $x\in\mathbb{R}^n.$ 
Let $x_0:\mathbb{R}\to\mathbb{R}^n$ be the unique solution to (\ref{1eps}) with $\eps=\eps_0$ provided by Theorem~\ref{thm11}.
Then, given any $\gamma>0$ there exists $t_1\in\mathbb{R}$ such that for any solution $x_\eps$ of (\ref{1eps}) defined on $[0,\infty),$ one has
\begin{equation}\label{i}
   \|x_\eps(t)-x_0(t)\|<\gamma,\qquad t\ge t_1,
\end{equation}
for all $\eps$ sufficiently close to $\eps_0.$
\end{theorem}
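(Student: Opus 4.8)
\begin{pf}[Proof proposal (plan)]
The plan is to estimate the scalar function $w(t)=\|x_\eps(t)-x_0(t)\|^2$ by a Lyapunov‑type argument, taking advantage of the fact that in (\ref{1eps}) the moving set $C(t)$ does \emph{not} depend on $\eps$. Thus $x_\eps(t),x_0(t)\in C(t)$ for all $t\ge 0$, and in particular $\|x_\eps(t)-x_0(t)\|\le 2M$ whenever $\|C(t)\|\le M$. This structural feature is what makes the argument go through, and it also makes it strictly simpler than the proof of Theorem~\ref{thm2}, where the Hausdorff‑distance error terms $\Delta_m(t)$ had to be carried along; here the two processes share the same constraint.

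Concretely, I would fix a solution $x_\eps$ of (\ref{1eps}) defined on $[0,\infty)$, pick a $t$ at which $\dot x_\eps(t)$ and $\dot x_0(t)$ exist and the corresponding inclusions hold, and write $\dot x_\eps(t)=-\xi_\eps(t)-f(t,x_\eps(t),\eps)$ and $\dot x_0(t)=-\xi_0(t)-f(t,x_0(t),\eps_0)$ with $\xi_\eps(t)\in N_{C(t)}(x_\eps(t))$, $\xi_0(t)\in N_{C(t)}(x_0(t))$. Then $\tfrac12\dot w(t)=\langle\dot x_\eps(t)-\dot x_0(t),x_\eps(t)-x_0(t)\rangle$, and substituting $c=x_0(t)\in C(t)$ into the definition (\ref{NC}) of $\xi_\eps(t)$ and $c=x_\eps(t)\in C(t)$ into that of $\xi_0(t)$ shows that $\langle-\xi_\eps(t)+\xi_0(t),x_\eps(t)-x_0(t)\rangle\le 0$, exactly as at the end of the proof of Theorem~\ref{thm2}. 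Hence $\tfrac12\dot w(t)\le -\langle f(t,x_\eps(t),\eps)-f(t,x_0(t),\eps_0),\,x_\eps(t)-x_0(t)\rangle$.

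Next I would split the $f$‑difference as $[f(t,x_\eps(t),\eps)-f(t,x_\eps(t),\eps_0)]+[f(t,x_\eps(t),\eps_0)-f(t,x_0(t),\eps_0)]$. The monotonicity condition (\ref{monot0}) applied to the second bracket gives $\langle f(t,x_\eps(t),\eps_0)-f(t,x_0(t),\eps_0),\,x_\eps(t)-x_0(t)\rangle\ge\alpha w(t)$, while uniform continuity of $f$ in $\eps$ at $\eps_0$ gives, for any prescribed $\eta>0$ and all $\eps$ in a suitable neighbourhood of $\eps_0$, the bound $\|f(t,x_\eps(t),\eps)-f(t,x_\eps(t),\eps_0)\|\le\eta$ uniformly in $t$; pairing with $x_\eps(t)-x_0(t)$ (of norm $\le 2M$) then contributes a term $\le 2M\eta$. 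Altogether $\tfrac12\dot w(t)\le 2M\eta-\alpha w(t)$ for a.a.\ $t\ge 0$, and the Gronwall--Bellman lemma (Lemma~\ref{trub} in the Appendix) yields
$$w(t)\le w(0)e^{-2\alpha t}+\frac{2M\eta}{\alpha}\bigl(1-e^{-2\alpha t}\bigr)\le 4M^2 e^{-2\alpha t}+\frac{2M\eta}{\alpha},\qquad t\ge 0,$$
since $w(0)\le 4M^2$. Given $\gamma>0$, I would choose $t_1$ so that $4M^2e^{-2\alpha t_1}<\gamma^2/2$ (a choice depending only on $\gamma,M,\alpha$, hence uniform over all solutions $x_\eps$ and over $\eps$), and then take $\eta$ — equivalently the neighbourhood of $\eps_0$ — so small that $2M\eta/\alpha<\gamma^2/2$; this gives $w(t)<\gamma^2$ for $t\ge t_1$, which is (\ref{i}).

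I do not expect a genuine obstacle. The only point requiring care is that $t_1$ must be chosen independently of the particular solution $x_\eps$ and of $\eps$, which is automatic because the transient $4M^2e^{-2\alpha t}$ dominates $w(0)e^{-2\alpha t}$ regardless of the initial data, and because global exponential stability at $\eps=\eps_0$ (Theorem~\ref{thm11}) is precisely what furnishes the uniform rate $\alpha$. A short regularity remark — that $w$ is absolutely continuous and that the two inclusions may be used simultaneously for a.a.\ $t$ — should be inserted, exactly as in Section~3.
\end{pf}
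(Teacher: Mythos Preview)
Your proposal is correct and follows essentially the same route as the paper: the paper packages your differential inequality $\tfrac12\dot w\le -\alpha w-\langle f(\cdot,\eps)-f(\cdot,\eps_0),x_\eps-x_0\rangle$ together with the Gronwall step into a separate Lemma~\ref{mainlem} (so that it can be reused in Theorem~\ref{icthm}), and then bounds the integral term exactly as you do, using the uniform continuity of $f$ in $\eps$ and the bound (\ref{M}). The only cosmetic difference is that the paper keeps the $f$-difference under the integral sign in (\ref{estimate1}) rather than passing immediately to the sup-bound $2M\eta$, but after applying Cauchy--Schwarz and (\ref{M}) this collapses to your estimate.
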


\noindent We remind the reader that coresponding results for differential inclusions with bounded right-hand-sides are known e.g. from Kloeden-Kozyakin \cite{kloeden1}.

\vskip0.2cm

\noindent The following lemma will be used iteratively throughout the rest of the paper.

\begin{lem}\label{mainlem} Let $x_\eps$ be a solution of (\ref{1eps}) defined on $[\tau,\infty)$. Let $x_0=x_{\eps_0}.$ If (\ref{monot0}) holds, then, for a.a. $t\ge \tau$, 
	\begin{equation}\label{estimate1}
	\lVert x_{\eps}(t) - x_0(t) \rVert^2 \leq e^{-2\alpha(t-\tau)}\lVert x_{\eps}(\tau) - x_0(\tau) \rVert^2 - 2\int_{\tau}^{t}e^{-2\alpha(t-s)}\langle f(s,x_\eps(s),\eps) - f(s,x_\eps(s),\eps_0) ,x_\eps(s) -x_0(s)   \rangle ds.
	\end{equation}
\end{lem}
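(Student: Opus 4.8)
The plan is to prove estimate (\ref{estimate1}) by the same Lyapunov-function technique used in the proof of Theorem~\ref{thm2}, but now comparing two solutions of the \emph{same} moving constraint $C(t)$ driven by two different forcing terms $f(\cdot,\cdot,\eps)$ and $f(\cdot,\cdot,\eps_0)$. Set $w(t)=\lVert x_\eps(t)-x_0(t)\rVert^2$. Since both $x_\eps$ and $x_0$ are absolutely continuous, $w$ is absolutely continuous and $\tfrac12\dot w(t)=\langle \dot x_\eps(t)-\dot x_0(t),\,x_\eps(t)-x_0(t)\rangle$ for a.a.\ $t\ge\tau$. At such a $t$, write $\dot x_\eps(t)=-\xi_\eps(t)-f(t,x_\eps(t),\eps)$ with $\xi_\eps(t)\in N_{C(t)}(x_\eps(t))$, and similarly $\dot x_0(t)=-\xi_0(t)-f(t,x_0(t),\eps_0)$ with $\xi_0(t)\in N_{C(t)}(x_0(t))$.

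First I would exploit the monotonicity of the normal cone: because $x_\eps(t),x_0(t)\in C(t)$, the definition (\ref{NC}) gives $\langle \xi_\eps(t),\,x_0(t)-x_\eps(t)\rangle\le 0$ and $\langle \xi_0(t),\,x_\eps(t)-x_0(t)\rangle\le 0$, hence $\langle \xi_\eps(t)-\xi_0(t),\,x_\eps(t)-x_0(t)\rangle\ge 0$, so $-\langle \xi_\eps(t)-\xi_0(t),x_\eps(t)-x_0(t)\rangle\le 0$. This is the crucial monotonicity step of the sweeping process and is the only place where $x_\eps(t),x_0(t)\in C(t)$ (guaranteed by Remark~1 / the structure of solutions) is used. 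Next I would split the forcing difference as
\[
 f(t,x_\eps(t),\eps)-f(t,x_0(t),\eps_0)=\bigl(f(t,x_\eps(t),\eps_0)-f(t,x_0(t),\eps_0)\bigr)+\bigl(f(t,x_\eps(t),\eps)-f(t,x_\eps(t),\eps_0)\bigr),
\]
and apply the monotonicity hypothesis (\ref{monot0}) to the first bracket, $\langle f(t,x_\eps(t),\eps_0)-f(t,x_0(t),\eps_0),\,x_\eps(t)-x_0(t)\rangle\ge\alpha\lVert x_\eps(t)-x_0(t)\rVert^2=\alpha w(t)$. Combining these bounds yields, for a.a.\ $t\ge\tau$,
\[
 \tfrac12\dot w(t)\le -\alpha\,w(t)-\langle f(t,x_\eps(t),\eps)-f(t,x_\eps(t),\eps_0),\,x_\eps(t)-x_0(t)\rangle,
\]
i.e.\ $\dot w(t)+2\alpha\,w(t)\le -2\langle f(t,x_\eps(t),\eps)-f(t,x_\eps(t),\eps_0),\,x_\eps(t)-x_0(t)\rangle$.

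Finally I would integrate this differential inequality via the integrating factor $e^{2\alpha t}$: the inequality is equivalent to $\tfrac{d}{dt}\bigl(e^{2\alpha t}w(t)\bigr)\le -2e^{2\alpha t}\langle f(t,x_\eps(t),\eps)-f(t,x_\eps(t),\eps_0),x_\eps(t)-x_0(t)\rangle$ for a.a.\ $t$, and integrating from $\tau$ to $t$ (legitimate since $e^{2\alpha(\cdot)}w(\cdot)$ is absolutely continuous) and multiplying through by $e^{-2\alpha t}$ gives exactly (\ref{estimate1}). I expect no serious obstacle here; the only points requiring a little care are the justification that $w$ (and $e^{2\alpha t}w$) is absolutely continuous so that the a.e.\ differential inequality integrates to the stated integral inequality, and the bookkeeping of which normal-cone inner products are nonpositive — but both are routine given the absolute continuity of sweeping-process solutions from Edmond--Thibault \cite[Theorem~1]{existence} and the convexity of $C(t)$.
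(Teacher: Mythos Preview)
Your proposal is correct and follows essentially the same approach as the paper's proof: differentiate $\lVert x_\eps(t)-x_0(t)\rVert^2$, discard the normal-cone contribution via the monotonicity of $N_{C(t)}$, split the forcing difference exactly as you do, apply (\ref{monot0}), and integrate the resulting linear differential inequality. The only cosmetic difference is that the paper invokes the Gronwall--Bellman lemma (Lemma~\ref{trub}) for the final step whereas you use the integrating factor $e^{2\alpha t}$ directly; these are of course equivalent.
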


\begin{proof}
For a.a. $t\ge\tau$ and $\eps\in\mathbb{R}$ we have 
	\begin{align*}
	\frac{d}{dt} \lVert x_\eps(t) -x_0(t) \rVert^2 &=2 \left< \dot{x_\eps}(t) -\dot{x_0}(t) , x_\eps(t) -x_0(t) \right>\\
	&\leq 2\left< -f\left(t , x_\eps(t),\eps\right) ,  x_\eps(t) -x_0(t) \right> + 2\left< f(t,x_0(t),\eps_0) , x_\eps(t) -x_0(t) \right>\\
	&= -2 \left\langle f(t,x_\eps(t),\eps) - f(t,x_\eps(t),\eps_0) ,  x_\eps(t) -x_0(t) \right\rangle -2\langle f(t,x_\eps(t),\eps_0) -f(t,x_0(t),\eps_0) , x_\eps(t) -x_0(t) \rangle \\ 
	&\leq -2\alpha \lVert x_{\eps}(t) - x_0(t) \rVert^2 - 2\langle f(t,x_\eps(t),\eps) - f(t,x_\eps(t),\eps_0) ,x_\eps(t) -x_0(t)   \rangle
	\end{align*}
and the conclusion follows by applying the Gronwall-Bellman lemma (see Lemma~\ref{trub} in the Appendix).
\end{proof}

\noindent {\it Proof of Theorem~\ref{thm4}.} By Lemma~\ref{mainlem} and (\ref{M}) one has
\begin{equation}\label{ii}
	\lVert x_{\eps}(t) - x_0(t) \rVert^2 \leq e^{-2\alpha t}\lVert x_{\eps}(0) - x_0(0) \rVert^2 +\left(\dfrac{1}{2\alpha}-\dfrac{e^{-2\alpha t}}{2\alpha}\right)\max\limits_{s\in[0,t]}\|f(s,x_\eps(s),\eps)-f(s,x_\eps(s),\eps_0)\|\cdot M,
\end{equation}
from which the conclusion follows.\qed

\begin{rem} The estimate (\ref{i}) can be extended to the entire $\mathbb{R},$ if $x_\eps$ is defined on the entire $\mathbb{R}$ (e.g. if $x_\eps$ is that given by Theorem~\ref{thm1}). Indeed, in this case (\ref{ii}) can be strengthened to
$$	\lVert x_{\eps}(t) - x_0(t) \rVert^2 \leq e^{-2\alpha (t-\tau)}\lVert x_{\eps}(\tau) - x_0(\tau) \rVert^2 +\left(\dfrac{1}{2\alpha}-\dfrac{e^{-2\alpha (t-\tau)}}{2\alpha}\right)\max\limits_{s\in[\tau,t]}\|f(s,x_\eps(s),\eps)-f(s,x_\eps(s),\eps_0)\|\cdot M,
$$
which gives
$$	\lVert x_{\eps}(t) - x_0(t) \rVert^2 \leq \dfrac{1}{2\alpha}\max\limits_{s\in(-\infty,t]}\|f(s,x_\eps(s),\eps)-f(s,x_\eps(s),\eps_0)\|\cdot M,
$$
by passing to the limit as $\tau\to-\infty.$
\end{rem}

\subsection{The case where the dependence of the perturbation on the parameter $\eps$ is only integrally continuous}

\noindent In this section we assume that the following version of Lipschitz condition (\ref{Lipf}) holds:
\begin{equation}\label{Lipfeps}
\begin{array}{ll}
  \|f(t_1,x,\eps)-f(t_2,x,\eps)\|\le L_\eps\|t_1-t_2\|,& {\rm for\ all\ }t_1,t_2\in\mathbb{R},\ x\in\mathbb{R}^n,\ \eps\in\mathbb{R}\backslash\{\eps_0\},\\
 \|f(t,x_1,\eps)-f(t,x_2,\eps)\|\le L_f\|x_1-x_2\|,& {\rm for\ all\ }t\in\mathbb{R},\ x_1,x_2\in\mathbb{R}^n,\ \eps\in\mathbb{R},
\end{array}
\end{equation}
where $L_\eps>0$ may depend on $\eps\in\mathbb{R}$ and $L_f>0$ is independent of $\eps\in\mathbb{R}^n.$ 
Following Krasnoselskii-Krein \cite{kk} and Demidovich \cite[Ch.~V,\ \S3]{dem}, we say that $f(t,x,\eps)$ is {\it integrally continuous} at $\eps=\eps_0$, if
\begin{equation}\label{ic}
   \lim\limits_{\eps\to\eps_0}\int\limits_{\tau}^{t} f(s,x,\eps)ds=\int\limits_{\tau}^{t} f(s,x,\eps_0)ds,\qquad \mbox{for all }\tau,t\in \mathbb{R},\ x\in \mathbb{R}^n.
\end{equation}
The central role in this section is played by a  generalization of the theorem on passage to the limit in the integral due to Krasnoselskii-Krein \cite{kk} (see also Demidovich \cite[Ch.~V,\ \S3]{dem}). We will formulate this theorem for the case when $f(t,x,\eps)$ satisfies the Lipschitz condition (\ref{Lipfeps}).
\begin{theorem}\label{kkthm} {\bf (Krasnoselskii-Krein \cite{kk})} Assume that $F:\mathbb{R}\times\mathbb{R}^k\times\mathbb{R}\to\mathbb{R}^n$ satisfies (\ref{Lipfeps}) and that $t\mapsto F(t,u,\eps_0)$ is continuous for every $u\in\mathbb{R}^k.$ Consider a family of continuous functions $\{u_\eps(t)\}_{\eps\in\mathbb{R}}$ defined on an interval $[\tau,T]$ such that $u_\eps(t)\to u_0(t)$ uniformly on $[\tau,T].$ If $F$ verifies the integral continuity property (\ref{ic}), then
$$
  \lim\limits_{\eps\to\eps_0}\int\limits_\tau^t F(s,u_\eps(s),\eps)ds=\int\limits_\tau^t F(s,u_0(s),\eps_0)ds,\qquad \mbox{for\ all\ }t\in[\tau,T].
$$
\end{theorem}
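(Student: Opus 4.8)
\emph{Sketch of a proof.} The plan is to follow the classical Krasnoselskii--Krein argument, whose whole point is to reduce, by a step-function approximation of $u_0$, the passage to the limit in an integral where $x$ is replaced by the \emph{varying} argument $u_0(s)$ to the passage to the limit with $x$ \emph{fixed}, which is precisely the integral continuity hypothesis (\ref{ic}). Before anything else I would check that all the integrands are legitimately integrable: for $\eps\neq\eps_0$ the map $s\mapsto F(s,u_\eps(s),\eps)$ is continuous because $F(\cdot,\cdot,\eps)$ is Lipschitz in both arguments by (\ref{Lipfeps}) and $u_\eps$ is continuous, while $s\mapsto F(s,u_0(s),\eps_0)$ is continuous because $F(\cdot,\cdot,\eps_0)$ is continuous in $t$ by hypothesis, Lipschitz in $x$ by (\ref{Lipfeps}), and $u_0$ is continuous. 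It also suffices to prove the statement for the full interval, i.e. that $\lim_{\eps\to\eps_0}\int_\tau^T F(s,u_\eps(s),\eps)\,ds=\int_\tau^T F(s,u_0(s),\eps_0)\,ds$, since the identical argument applies verbatim to every subinterval $[\tau,t]$ with $t\le T$, on which $u_\eps\to u_0$ still converges uniformly.

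Next I would split the difference as
\begin{equation*}
\int_\tau^T F(s,u_\eps(s),\eps)\,ds-\int_\tau^T F(s,u_0(s),\eps_0)\,ds=A_\eps+B_\eps,
\end{equation*}
with $A_\eps=\int_\tau^T\bigl(F(s,u_\eps(s),\eps)-F(s,u_0(s),\eps)\bigr)\,ds$ and $B_\eps=\int_\tau^T\bigl(F(s,u_0(s),\eps)-F(s,u_0(s),\eps_0)\bigr)\,ds$. The term $A_\eps$ is immediate: the second line of (\ref{Lipfeps}) gives $\|A_\eps\|\le L_f(T-\tau)\sup_{s\in[\tau,T]}\|u_\eps(s)-u_0(s)\|$, which tends to $0$ by the assumed uniform convergence $u_\eps\to u_0$.

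The term $B_\eps$ is the heart of the matter, because (\ref{ic}) is only available for a fixed value of $x$. Here I would fix a tolerance, invoke uniform continuity of $u_0$ on the compact interval $[\tau,T]$ to pick a partition $\tau=\tau_0<\tau_1<\dots<\tau_N=T$ fine enough that $\|u_0(s)-u_0(\tau_{j-1})\|<\delta$ for $s\in[\tau_{j-1},\tau_j]$, and let $\bar u$ denote the step function equal to $c_j:=u_0(\tau_{j-1})$ on $[\tau_{j-1},\tau_j)$. Writing
\begin{equation*}
B_\eps=\int_\tau^T\!\bigl(F(s,u_0(s),\eps)-F(s,\bar u(s),\eps)\bigr)ds+\sum_{j=1}^{N}\int_{\tau_{j-1}}^{\tau_j}\!\bigl(F(s,c_j,\eps)-F(s,c_j,\eps_0)\bigr)ds+\int_\tau^T\!\bigl(F(s,\bar u(s),\eps_0)-F(s,u_0(s),\eps_0)\bigr)ds,
\end{equation*}
the first and third integrals are each bounded in norm by $L_f(T-\tau)\delta$ by the Lipschitz estimate in $x$ from (\ref{Lipfeps})—crucially, $L_f$ is independent of $\eps$, so this bound is uniform in $\eps$—while the middle term is a \emph{finite} sum of $N$ expressions, each of which tends to $0$ as $\eps\to\eps_0$ by the integral continuity (\ref{ic}) applied with the fixed point $x=c_j$ and fixed endpoints $\tau_{j-1},\tau_j$. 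Choosing first $\delta$ small enough that the first and third integrals are negligible, and then a neighbourhood of $\eps_0$ small enough that $\|A_\eps\|$ and the $N$-term sum are both negligible, makes $\|A_\eps+B_\eps\|$ arbitrarily small for $\eps$ near $\eps_0$, which is the claim. The only real obstacle is exactly this decoupling of the limit $\eps\to\eps_0$ from the spatial argument; once $u_0$ is frozen on the subintervals it dissolves into finitely many applications of (\ref{ic}), and the approximation error incurred is controlled uniformly in $\eps$ thanks to the $\eps$-independence of $L_f$.
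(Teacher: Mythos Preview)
Your argument is correct and follows the classical Krasnoselskii--Krein proof faithfully. Note, however, that the paper does not actually supply its own proof of this theorem: it states the result with a citation to Krasnoselskii--Krein \cite{kk} (and Demidovich) and then proceeds directly to use it. So there is nothing in the paper to compare against beyond the reference itself, and your write-up is precisely the standard argument that those references contain.
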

\noindent In this statement, we take $k=n$ when referring to (\ref{Lipfeps}) and (\ref{ic}) in the context of the function $F$. 

\vskip0.2cm

\noindent We are now in the position to prove the main result of this section.

\begin{theorem}\label{icthm} Let $f:\mathbb{R}\times\mathbb{R}^n\times\mathbb{R}\to\mathbb{R}^n$ satisfy the Lipschitz condition  (\ref{Lipfeps}). Assume that $f$ satisfies the monotonicity condition (\ref{monot0}).
Assume that, for any $t\in\mathbb{R}$, the set $C(t)\subset\mathbb{R}^n$ is nonempty, closed, convex and the uniformly bounded map $t\mapsto C(t)$ satisfies the Lipschitz condition (\ref{Lip}). Finally, assume that   $f(t,x,\eps)$ is integrally continuous at $\eps=\eps_0$. Then, given any $\gamma>0$ there exists $t_1\ge 0$ such that for any solution $x_\eps $ to (\ref{1eps}) defined on $[0,\infty)$ and for any $t_2\ge t_1$, one has
$$
   \|x_\eps(t)-x_0(t)\|<\gamma,\qquad t\in[t_1,t_2],
$$
for all $\eps$ sufficiently close to $\eps_0.$
\end{theorem}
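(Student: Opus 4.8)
The plan is to reduce the assertion, on the fixed compact window $[t_1,t_2]$, to the global exponential stability already in hand at $\eps=\eps_0$ (Theorem~\ref{thm11}), using the Krasnoselskii--Krein theorem (Theorem~\ref{kkthm}) to pass to the limit through the merely integral dependence of $f$ on $\eps$. First I would fix $t_1$. By the monotonicity (\ref{monot0}) (equivalently by Lemma~\ref{mainlem} with zero perturbation) any solution $z$ of the unperturbed process $-\dot z\in N_{C(t)}(z)+f(t,z,\eps_0)$ with $z(0)\in C(0)$ satisfies $\|z(t)-x_0(t)\|\le\|z(0)-x_0(0)\|e^{-\alpha t}\le 2Me^{-\alpha t}$, using the a priori bound (\ref{M}); so I choose $t_1\ge 0$ with $2Me^{-\alpha t_1}<\gamma$, which depends only on $\gamma,M,\alpha$, as required.

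Next, fix $t_2\ge t_1$ and argue by contradiction: suppose the conclusion fails, so there are $\eps_k\to\eps_0$ and solutions $x_{\eps_k}$ of (\ref{1eps}) on $[0,\infty)$ with $\max_{t\in[t_1,t_2]}\|x_{\eps_k}(t)-x_0(t)\|\ge\gamma$ for all $k$. The heart of the proof is to extract a subsequence along which $x_{\eps_k}\to x^*$ uniformly on $[0,t_2]$, with $x^*$ a solution of the unperturbed sweeping process. All $x_{\eps_k}$ take values in the fixed compact set $\overline{\bigcup_{t\in[0,t_2]}C(t)}$, and, invoking the regularity of sweeping-process solutions on a compact interval (Edmond--Thibault \cite[Theorem~1]{existence}) together with (\ref{Lip}), (\ref{Lipfeps}) and the global bound on $C$, one gets uniform-in-$k$ Lipschitz (equivalently bounded-variation) estimates for the $x_{\eps_k}$ on $[0,t_2]$, hence precompactness of $\{x_{\eps_k}\}$ in $C([0,t_2];\mathbb{R}^n)$; passing to a subsequence I may assume $x_{\eps_k}\to x^*$ uniformly on $[0,t_2]$ and $x_{\eps_k}(0)\to\xi^*\in C(0)$. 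I then pass to the limit in (\ref{1eps}): Theorem~\ref{kkthm} (with $F=f$, $u_{\eps_k}=x_{\eps_k}$, $u_0=x^*$) gives $\int_0^t f(s,x_{\eps_k}(s),\eps_k)\,ds\to\int_0^t f(s,x^*(s),\eps_0)\,ds$ uniformly on $[0,t_2]$, while the closedness of the graph of $x\mapsto N_{C(t)}(x)$ (maximal monotonicity) handles the normal-cone terms; so $x^*$ solves $-\dot x^*\in N_{C(t)}(x^*)+f(t,x^*,\eps_0)$ on $[0,t_2]$ with $x^*(0)=\xi^*$.

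By the first step $\|x^*(t)-x_0(t)\|\le 2Me^{-\alpha t}<\gamma$ for $t\ge t_1$, hence $\max_{t\in[t_1,t_2]}\|x^*(t)-x_0(t)\|<\gamma$; but uniform convergence forces $\max_{t\in[t_1,t_2]}\|x_{\eps_k}(t)-x_0(t)\|\to\max_{t\in[t_1,t_2]}\|x^*(t)-x_0(t)\|<\gamma$, contradicting the choice of the $x_{\eps_k}$. This contradiction produces a neighbourhood of $\eps_0$ on which every solution $x_\eps$ satisfies $\|x_\eps(t)-x_0(t)\|<\gamma$ for $t\in[t_1,t_2]$, which is the claim. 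The step I expect to be the main obstacle is the compactness/limit-passage inside the contradiction argument: Theorem~\ref{kkthm} needs uniform convergence of the state arguments, so one must first secure the uniform-in-$\eps$ regularity of the solution family $\{x_\eps\}$ on the fixed compact interval $[0,t_2]$ (the equi-Lipschitz/bounded-variation bounds, coming from (\ref{Lip}), (\ref{Lipfeps}) and the boundedness of $C$ through the Edmond--Thibault estimates), and then verify that the differential inclusion (\ref{1eps}) is stable under the uniformly convergent subsequence. An essentially equivalent route keeps $x_\eps$ and starts, as in the proof of Theorem~\ref{thm4}, from Lemma~\ref{mainlem} with $\tau=0$ and (\ref{M}): the term $e^{-2\alpha t}\|x_\eps(0)-x_0(0)\|^2\le 4M^2e^{-2\alpha t}$ is absorbed by the same choice of $t_1$, and the ``perturbation integral'' $-2\int_0^t e^{-2\alpha(t-s)}\langle f(s,x_\eps(s),\eps)-f(s,x_\eps(s),\eps_0),x_\eps(s)-x_0(s)\rangle\,ds$ is shown to vanish uniformly on $[0,t_2]$ as $\eps\to\eps_0$ by integrating by parts (moving the $s$-derivative onto the slowly varying factor $e^{-2\alpha(t-s)}(x_\eps(s)-x_0(s))$) and noting that, along a uniformly convergent subsequence, $\int_0^{\cdot}[f(s,x_\eps(s),\eps)-f(s,x_\eps(s),\eps_0)]\,ds\to 0$ uniformly by Theorem~\ref{kkthm}; there too the nontrivial ingredient is the uniform bound, on $[0,t_2]$, for the velocities $\dot x_\eps$ (equivalently for the normal-cone multipliers).
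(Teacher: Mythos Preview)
Your second route (starting from Lemma~\ref{mainlem}) is essentially what the paper does, and your identification of the crucial ingredients---the contradiction setup, the choice of $t_1$ to kill the term $e^{-2\alpha t}\lVert x_\eps(0)-x_0(0)\rVert^2$, the equi-Lipschitz bound from Edmond--Thibault giving a uniformly convergent subsequence via Arzel\`a--Ascoli, and the appeal to Theorem~\ref{kkthm}---is accurate. The paper, however, avoids your integration-by-parts manoeuvre by a slick repackaging: it sets
\[
F\bigl(t,(x_1,x_2)^T,\eps\bigr)=\langle f(t,x_1,\eps)-f(t,x_1,\eps_0),\,x_2\rangle,\qquad
u_m(t)=\bigl(x_{\eps_m}(t),\,e^{2\alpha t}(x_{\eps_m}(t)-x_0(t))\bigr)^T,
\]
so that the perturbation integral in (\ref{estimate1}) is precisely $-2e^{-2\alpha t}\int_\tau^t F(s,u_m(s),\eps_m)\,ds$. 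Since $F(\cdot,\cdot,\eps_0)\equiv 0$ and $u_m$ converges uniformly (by the compactness step), Theorem~\ref{kkthm} applies directly and yields $\int_\tau^t F(s,u_m(s),\eps_m)\,ds\to 0$, with no need to differentiate the slowly varying factor.

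Your primary route---passing to the limit in the inclusion itself and showing the uniform limit $x^*$ solves the unperturbed sweeping process---is a genuinely different strategy. It is viable, but the sentence ``closedness of the graph of $x\mapsto N_{C(t)}(x)$ (maximal monotonicity) handles the normal-cone terms'' hides real work: one only has weak(-$*$) convergence of $\dot x_{\eps_k}$, and $f(\cdot,x_{\eps_k}(\cdot),\eps_k)$ converges only in the integral sense of Theorem~\ref{kkthm}, so you must upgrade the latter to weak convergence in $L^1$ (this needs a uniform-in-$\eps$ bound on $f$, which the paper in fact invokes tacitly) and then use the weak--strong closedness of the maximal monotone operator $N_{C(t)}$ integrated against test functions. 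The paper's route sidesteps all of this: it never needs to identify the limit as a solution, because Lemma~\ref{mainlem} already compares $x_{\eps_m}$ directly with $x_0$, and the only limit that has to be computed is the scalar integral $\int F(s,u_m,\eps_m)\,ds$. That is the payoff of staying with the estimate (\ref{estimate1}) rather than with the inclusion.
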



\begin{proof} 
Let us fix some closed interval $[t_1,t_2]$ and assume that the statement of the theorem is wrong, i.e. assume that there exists $\gamma>0$ such that \begin{equation}\label{contr}
\max\limits_{t\in[t_1,t_2]}\|x_{\eps_m}(t)-x_0(t)\|\ge \gamma
\end{equation} for some sequence $\eps_m\to\eps_0$ as $m\to\infty.$ By (\ref{M}), we can find $\tau<0$ such that 
\begin{equation}\label{conc1}
   e^{-2\alpha(t-\tau)}\lVert x_{\eps_m}(\tau) - x_0(\tau) \rVert^2<\dfrac{\gamma}{2},\quad{\rm for\ all\ }m\in\mathbb{N},\ t\in[t_1,t_2].
\end{equation}
In what follows, we show that the integral term of the estimate (\ref{estimate1}) can be made smaller that $\gamma/2$ on the sequence $x_{\eps_m}$ as well.
Since $f(t,x,\eps)$ is uniformly bounded and  $C$ satisfies the global Lipschitz condition (\ref{Lip}), by Edmond-Thibault \cite[Theorem~1]{existence} we have the existence of $L_0>0$ such that 
$$
   \|\dot x_{\eps_m}(t)\|\le L_0,\qquad\mbox{for all }m\in\mathbb{N},{\rm \ and \ a.a.\ }t\in[\tau,T]
$$where $T>0$. Since the functions of $\{x_{\eps_m}(t)\}_{m\in\mathbb{N}}$ are uniformly bounded according to (\ref{M}), the Ascoli-Arzela theorem implies that without loss of generality the sequence $\{x_{\eps_m}(t)\}_{m\in\mathbb{N}}$ can be assumed convergent uniformly on $[\tau,T].$ Introduce
$$
  F(t,(x_1,x_2)^T,\eps)=\left<f(t,x_1,\eps)-f(t,x_1,\eps_0),x_2\right>,\quad u_m(t)=\left(x_{\eps_m}(t),e^{2\alpha t}\left(x_{\eps_m}(t)-x_0(t)\right)\right)^T,\quad    
$$
so that $F:\mathbb{R}\times\mathbb{R}^{2n}\times\mathbb{R}\to\mathbb{R}^n$.
Since $f(t,x,\eps)$ is integrally continuous at $\eps=\eps_0$, then
$$
   \lim\limits_{\eps\to\eps_0}\int\limits_\tau^t F\left(s,(x_1,x_2)^T,\eps\right)ds = 0,\quad{\rm for\ all\ } (x_1,x_2)^T\in\mathbb{R}^{2n},\ t\in[\tau,T].
$$
Furthermore, the function $F$ satisfies the same type of Lipschitz condition (\ref{Lipfeps}) as $f$ does.
The Krasnoselskii-Krein theorem (Theorem~\ref{kkthm}), therefore, implies 
\begin{equation}\label{conc2}
  \lim\limits_{m\to\infty}\int\limits_\tau^t F(s,u_m(s),\eps_m)ds=0,\quad{\rm for\ all\ }t\in [\tau,T].
\end{equation}
The conclusions (\ref{conc1}) and (\ref{conc2}) contradict (\ref{contr}) because of (\ref{estimate1}). The proof follows by Lemma~\ref{mainlem}.
\end{proof}

\subsection{A particular case: high-frequency vibrations} 

\noindent In this section we consider a sweeping process 
\begin{equation}\label{hf}
   -\dot x(t)\in N_{C(t)}(x(t))+g\left(\dfrac{t}{\eps},x(t)\right),
\end{equation}
where both $t\mapsto C(t)$ and $t\mapsto g(t,x)$ are almost periodic and we use 
Theorem~\ref{icthm} in order to estimate the location of solutions of (\ref{hf}) 
for large values of time and for small values of $\eps.$ 

\vskip0.2cm

\noindent Since $g$ is almost periodic in the first variable, the following property holds uniformly in $a\in\mathbb{R}$ (see Bohr \cite[p.~44]{bohr})  
\begin{equation}\label{g0}
  g_0(x)=   \lim\limits_{T\to\infty}\dfrac{1}{T}\int\limits_0^{T}g(\tau,x)d\tau=   \lim\limits_{T\to\infty}\dfrac{1}{T}\int\limits_a^{T+a}g(\tau,x)d\tau,
\end{equation}
where both limits exist. Therefore,
$$
   \lim\limits_{\eps\to 0}\int\limits_{\tau}^t g\left(\dfrac{s}{\eps},x\right)ds=\lim\limits_{T\to \infty}(t-\tau)\dfrac{1}{T}\int\limits_{\tau T/(t-\tau)}^{T+\tau T/(t-\tau)}g(s,x)ds=\int\limits_\tau^t g_0(x) ds.
$$
By the other words,
the function
$$
  f(t,x,\eps)=\left\{\begin{array}{ll}
     g\left(\dfrac{t}{\eps},x\right), & {\rm if}\ \eps\not=0,\\
     g_0(x), & {\rm if}\ \eps=0,\end{array}\right.
$$
is integrally continuous at $\eps=0$ in the sense of (\ref{ic}). We arrive to following corollary of Theorems \ref{thm2} and \ref{icthm}.

\begin{cor}\label{coro} Let $t\mapsto C(t)$ be an almost periodic function that satisfies the global Lipschitz condition (\ref{Lip}). Assume that, for each $x\in\mathbb{R}^n$, the function $t\mapsto g(t,x)$ is almost periodic and satisfies the global Lipschitz condition
$$
   \left\|g(t_1,x_1)-g(t_2,x_2)\right\|\le L_g|t_1-t_2|+L_g\|x_1-x_2\|,\qquad {\rm for\ all\ }t_1,t_2\in\mathbb{R},\ x_1,x_2\in\mathbb{R}^n.
$$
Finally, assume that for some $\alpha>0$ the function $g_0$ given by (\ref{g0}) satisfies the monotonicity condition
$$
   \left<g_0(x_1)-g_0(x_2),x_1-x_2\right>\ge \alpha\|x_1-x_2\|^2,\qquad {\rm for\ all\ }x_1,x_2\in\mathbb{R}^n.
$$ If $x_\eps$ is any solution of (\ref{hf}) defined on $[0,\infty)$, then
uniformly on any time-interval $[t_1,t_2]$ with sufficiently large $t_1$, the family $\{x_\eps(t)\}_{\eps\in\mathbb{R}}$ converges, as $\eps\to 0$, to the unique globally exponentially stable almost periodic solution $x_0(t)$ of the averaged sweeping process
$$
   -\dot x(t)\in N_{C(t)}(x(t))+g_0(x(t)).
$$
\end{cor}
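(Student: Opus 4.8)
The plan is to verify that the function
$$
  f(t,x,\eps)=\left\{\begin{array}{ll} g\!\left(\dfrac{t}{\eps},x\right), & \eps\neq 0,\\ g_0(x), & \eps=0,\end{array}\right.
$$
satisfies all hypotheses of Theorems \ref{thm2} and \ref{icthm} with $\eps_0=0$, and then simply invoke those theorems. First I would check the Lipschitz condition (\ref{Lipfeps}): for $\eps\neq 0$ the map $t\mapsto g(t/\eps,x)$ is Lipschitz in $t$ with constant $L_g/|\eps|$ (so $L_\eps=L_g/|\eps|$ is allowed to blow up as $\eps\to 0$, which is exactly why (\ref{Lipfeps}) rather than (\ref{Lipf}) is the relevant hypothesis), and the Lipschitz constant in $x$ is $L_g$ uniformly in $\eps$; for $\eps=0$ one needs $g_0$ Lipschitz in $x$ with constant $L_g$, which follows by passing to the limit in the averages (\ref{g0}) of the Lipschitz estimate on $g$. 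The monotonicity condition (\ref{monot0}) at $\eps_0=0$ is the assumed monotonicity of $g_0$. Integral continuity at $\eps=0$ in the sense of (\ref{ic}) is precisely the computation displayed just before the statement of the corollary, which rests on the uniform (in the base point $a$) convergence of the Bohr mean (\ref{g0}); I would spell out the change of variables $s=\eps\tau$ in $\int_\tau^t g(s/\eps,x)\,ds$ and then identify the resulting average over the window $[\tau T/(t-\tau),\,(T+\tau)T/(t-\tau)]$ with $g_0(x)$ as $\eps\to 0$ (equivalently $T=(t-\tau)/\eps\to\infty$), using that the convergence in (\ref{g0}) is uniform in the left endpoint.

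Granting these verifications, Theorem \ref{thm2} applies to the averaged sweeping process $-\dot x\in N_{C(t)}(x)+g_0(x)$ — since $C$ is almost periodic and globally Lipschitz, $g_0$ is globally Lipschitz, and $g_0$ is monotone — and yields a unique globally exponentially stable almost periodic solution $x_0$ defined on all of $\mathbb R$. (Here one also uses that $g_0$ is bounded on bounded sets and $C$ is uniformly bounded, which is part of the hypotheses; boundedness of $g_0$ on the relevant compact set follows from its Lipschitz continuity.) Then Theorem \ref{icthm}, applied with this $f$ and $\eps_0=0$, gives exactly the assertion: for every $\gamma>0$ there is $t_1\ge 0$ so that any solution $x_\eps$ of (\ref{hf}) on $[0,\infty)$ satisfies $\|x_\eps(t)-x_0(t)\|<\gamma$ on $[t_1,t_2]$ for all $\eps$ near $0$, which is the claimed uniform convergence on time-intervals $[t_1,t_2]$ with $t_1$ large.

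The main obstacle is the integral-continuity computation: one must be careful that as $\eps\to 0$ the integration window in the rescaled integral has both endpoints moving and its length $T=(t-\tau)/\eps$ tending to infinity, so the naive "substitute the mean value" heuristic only works because of the \emph{uniformity in the starting point} in Bohr's averaging theorem (\ref{g0}); without that uniformity the limit could fail to be $g_0(x)$. I expect everything else — the two Lipschitz estimates and the transfer of monotonicity — to be routine. A minor point to address is that (\ref{hf}) is genuinely of the form (\ref{1eps}) with this $f$, so that the solutions $x_\eps$ referenced in the corollary are precisely the objects Theorem \ref{icthm} speaks about.
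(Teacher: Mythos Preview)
Your proposal is correct and follows exactly the paper's own route: define $f(t,x,\eps)$ as you do, verify the Lipschitz condition (\ref{Lipfeps}), the monotonicity (\ref{monot0}) at $\eps_0=0$, and the integral continuity (\ref{ic}) via Bohr's uniform mean-value property (\ref{g0}), then invoke Theorems~\ref{thm2} and \ref{icthm}. The only slip is a typo in your rescaled integration window---the upper endpoint should be $T+\tau T/(t-\tau)$, not $(T+\tau)T/(t-\tau)$---but the argument is the intended one.
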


\subsection{Instructive examples}\label{examples}
\noindent The examples of this section illustrate how the results of the paper are supposed to be used in applications.
\begin{ex}\label{exam1}
\noindent Consider a one-dimensional sweeping process
\begin{equation}\label{ex1}
   -\dot x(t)\in N_{[\sin(t),\sin(t)+1]}(x(t))+\eps x^2(t)+\left(\sin\left(\sqrt{2}\cdot t\right)+2 \right) x(t).
\end{equation}
 The sweeping process (\ref{ex1}) satisfies the monotonicity property (\ref{monot}) when $\eps=0.$ Theorems \ref{thm2} and \ref{thm4}  imply that for any $\gamma>0$ there exists $t_1>0$ such that any solution $x_\eps$ of (\ref{ex1}) with $x_\eps(0)\in [0,1]$ satisfies $\|x_\eps(t)-x_0(t)\|\le\gamma$ for all $t\ge t_1$ and for all $|\eps|$ sufficiently small, where $x_0$ is the unique globally exponentially stable almost periodic solution to
$$   -\dot x(t)\in N_{[\sin(t),\sin(t)+1]}(x(t))+\left(\sin\left(\sqrt{2}\cdot t\right)+2 \right)x(t).
$$ \end{ex}

\begin{ex}\label{exam2} Let us now show that the monotonicity of a sweeping process gets broken by a high-frequency ingredient as follows
\begin{equation}\label{ex2}
   -\dot x(t)\in N_{[\sin(t),\sin(t)+1]}(x(t))+\sin\left(\dfrac{t}{\eps}\right) x^2(t)+\left(\sin\left(\sqrt{2}\cdot t\right)+2 \right)x(t).
\end{equation}
The non-monotonic term $\sin\left(\dfrac{t}{\eps}\right)$ no longer approaches 0 as it took place in Example~\ref{exam1} and Theorem~\ref{thm4} is inapplicable. However, $\sin\left(\dfrac{t}{\eps}\right)$ approaches 0 as $\eps\to 0$ integrally (i.e. in the sense of (\ref{ic})) on any bounded time interval $[t_1,t_2]$. Therefore, Corollary~\ref{coro} ensures that given any $\gamma>0$ there exists $t_1>0$ such that for any $t_2>t_1$ and for any solution $x_\eps$ of (\ref{ex2}) with $x_\eps(0)\in[0,1]$ one has $\|x_\eps(t)-x_0(t)\|\le\gamma$ on $[t_1,t_2]$ for all $|\eps|$ sufficiently small, where $x_0$ is the unique globally exponentially stable almost periodic solution to the averaged sweeping process $$
 -\dot x(t)\in N_{[\sin(t),\sin(t)+1]}(x(t))+\left(\sin\left(\sqrt{2}\cdot t\right)+2 \right)x(t).
$$
\end{ex}
 
\noindent To summarize, Examples~\ref{exam1} and \ref{exam2} establish useful qualitative properties of  non-monotone sweeping processes without any need of actual computing of solutions. Numerical computation of solutions of (\ref{ex1}) and (\ref{ex2}) (e.g. using the catch-up algorithm of Edmond-Thibault \cite{existence}) is thus outside the scope of this paper.

\subsection{Applications in elastoplasticity} \label{newsec}

\noindent The perturbation term of the sweeping processes that model networks of elastoplastic springs (like those in Bastein et al \cite{lam2}) does not generally satisfy the monotonicity property (\ref{monot}) because it always contains oscillatory terms coming from springs. One can expect monotonicity (caused by viscous friction) only when the eigenfrequencies of all springs vanish, which suggests that magnitudes of these eigenfrequencies is a natural choice for the small parameter $\eps$ as long as applications of Theorems \ref{thm4} and \ref{icthm} in elastoplasticity are concerned. The eigenfrequencies of the springs can be viewed small compared to other parameters, if the masses of nodes of the network (i.e. inertial forces) are large. 
However, setting the so-selected small parameter $\eps$ to 0 will ensure monotonicity and global asymptotic stability for velocity-like variables only, not for the position-like variables. This can be intuitively seen from a simple oscillator $\ddot x+c\dot x+\eps h(t,x)=0$, whose solutions approach 
those of  $\ddot x+c\dot x=0$ as $\eps\to 0$ (assuming that $h(t,x)$ stays bounded).
The solutions of the reduced oscillator asymptotically approach the line $\mathbb{R}\times\{0\}$ because of the monotonicity provided by the friction term. As a consequence, the solutions of the original oscillator stay close to $\mathbb{R}\times\{0\}$ for small values of $\eps>0.$ Coming back to the sweeping processes of elastoplasticity, we expect that for large inertial forces the methods of Theorems \ref{thm4} and \ref{icthm}  will predict convergence to the manifold of  equilibria that correspond to infinite inertial forces. Pursuing this plan is a subject of a different paper, that we are working on.

\section{Conclusion} 

\noindent In this paper we established the existence and global exponential stability of bounded and almost periodic solutions of the Moreau sweeping process (\ref{1}). We proved that non-monotone sweeping processes with bounded right-hand-sides admit at least one solution defined on the entire $\mathbb{R}.$ When the sweeping process satisfies the monotonicity property (\ref{monot}), we proved the existence of exactly one bounded solution defined on $\mathbb{R}$ which is almost periodic when the right-hand-sides of (\ref{1}) are almost periodic. 
\vskip0.2cm

\noindent When the right-hand-sides of (\ref{1}) are non-monotone, but close to monotone, we discovered that all the solutions to (\ref{1}) are close to the unique bounded (or almost periodic) solution of the respective monotone process for large values of time. In particular, we initiated the development of the averaging theory for Moreau sweeping process (\ref{1}) with high-frequency almost periodic  excitation $g\left(\dfrac{t}{\eps},x\right)$, where only monotonicity of the average 
 $g_0(x)=\lim\limits_{T\to\infty}\dfrac{1}{T}\int\limits_0^T g(s,x)ds$ is required.
 This result can be used for the design of  vibrational control strategies for Moreau sweeping processes (see e.g. Bullo \cite{bullo} for the respective theory in the case of differential equation).

\vskip0.2cm

\noindent The approach of this paper finds applications in the problem of global stabilization of parallel networks of elastoplastic springs where the period of the mechanical loading (e.g. stretching/compressing) of springs  doesn't coincide with the period of the force that excites the masses at nodes, as we discussed in the Introduction and in Section~\ref{newsec}.

\vskip0.2cm

\noindent Further potential applications of the results of this paper are in studying the dynamics of  a circuit involving devices like diodes, thyristors and  diacs  (see Addi  et al \cite{hybrid}) when ampere-volt characteristics (for the set function) and  voltage supply (for the perturbation) receive time-periodic excitations of different periods. Such a study will require extending our theory to sweeping processes with state-dependent convex constraints.

\section{Appendix}

\noindent The following version of Gronwall-Bellman lemma and its proof are taken from Trubnikov-Perov \cite[Lemma~1.1.1.5]{trub}. 

\begin{lem}\label{trub} {\bf (Gronwall-Bellman)} Let an absolutely continuous function $a:[0,T]\to\mathbb{R}$ satisfy
\begin{equation}\label{1.1.1.21}
   \dot a \le \lambda a(t)+b(t),\qquad{\rm for\ a.a.\ }t\in[0,T],
\end{equation}
where $b:[0,T]\to\mathbb{R}$ is an integrable function. Then
$$
   a(t)\le e^{\lambda t}a(0)+\int\limits_0^t e^{\lambda (t-s)}b(s)ds,\qquad{\rm for\ all\ }t\in[0,T].
$$
\end{lem}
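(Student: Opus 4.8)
The plan is to reduce the differential inequality to an ordinary integration by means of the integrating factor $e^{-\lambda t}$, exactly as in the treatment of the scalar linear ODE $\dot a=\lambda a+b$. First I would set $u(t)=e^{-\lambda t}a(t)$ on $[0,T]$. Since $a$ is absolutely continuous and $t\mapsto e^{-\lambda t}$ is continuously differentiable (hence Lipschitz on the compact interval $[0,T]$), the product $u$ is again absolutely continuous on $[0,T]$, and the product rule holds at almost every point, giving $\dot u(t)=e^{-\lambda t}(\dot a(t)-\lambda a(t))$ for a.a.\ $t\in[0,T]$.

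Next I would insert the hypothesis (\ref{1.1.1.21}): since $\dot a(t)\le\lambda a(t)+b(t)$ for a.a.\ $t$ and $e^{-\lambda t}>0$, it follows that $\dot u(t)\le e^{-\lambda t}b(t)$ for a.a.\ $t\in[0,T]$. The right-hand side is integrable on $[0,T]$ because $b$ is integrable and $e^{-\lambda t}$ is bounded there; hence, recovering the absolutely continuous function $u$ from its derivative, $u(t)-u(0)=\int_0^t\dot u(s)\,ds\le\int_0^t e^{-\lambda s}b(s)\,ds$ for every $t\in[0,T]$.

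Finally I would undo the substitution. Since $u(0)=a(0)$, the last inequality reads $e^{-\lambda t}a(t)\le a(0)+\int_0^t e^{-\lambda s}b(s)\,ds$; multiplying through by the positive factor $e^{\lambda t}$ and absorbing it into the integrand yields $a(t)\le e^{\lambda t}a(0)+\int_0^t e^{\lambda(t-s)}b(s)\,ds$ for all $t\in[0,T]$, which is the assertion. I do not expect a genuine obstacle here; the only points deserving a line of justification are that the product of an absolutely continuous function and a smooth function is absolutely continuous (so that the fundamental theorem of calculus applies to $u$) and that an almost-everywhere differential inequality may be integrated termwise — both standard facts in the theory of absolutely continuous functions, which is presumably why the detailed argument is attributed to Trubnikov--Perov.
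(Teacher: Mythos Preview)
Your proof is correct and uses the same integrating-factor idea as the paper. The only cosmetic difference is that the paper first writes down the candidate bound $\psi(t)=e^{\lambda t}a(0)+\int_0^t e^{\lambda(t-s)}b(s)\,ds$ and then applies the factor $e^{-\lambda t}$ to the difference $u=a-\psi$, whereas you apply it directly to $a$; the underlying computation is identical.
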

\begin{proof} By introducing
$$
   \psi(t)=e^{\lambda t} a(0)+\int\limits_0^t e^{\lambda (t-s)}b(s)ds,
$$
one has
$$
  \psi(t)e^{-\lambda t}-\int\limits_0^t e^{-\lambda s}b(s)ds=a(0)
$$
and so
$$
  \dfrac{d}{dt}\left[\psi(t)e^{-\lambda t}-\int\limits_0^t e^{-\lambda s}b(s)ds\right]=0,\qquad{\rm for\ a.a.\ }t\in[0,T],
$$
which implies
$$
  \dot \psi(t)-\lambda \psi(t)=b(t)\ge \dot a(t)-\lambda a(t).
$$
If now
$$
  u(t)=a(t)-\psi(t),
$$
then $\dot u(t)\le \lambda u(t)$ and so $\dfrac{d}{dt}\left[u(t)e^{-\lambda t}\right]=e^{-\lambda t}(\dot u-\lambda u)\le 0,$ i.e. $u(t)e^{-\lambda t}\le u(0)$. Therefore, $u(t)\le 0$ and
$$
  a(t)\le\psi(t)=e^{\lambda t}a(0)+\int\limits_0^t e^{\lambda (t-s)} b(s)ds.
$$
\end{proof}

\noindent The following proof is known (see e.g. \cite[Theorem~8.7]{leine} and \cite[Lemma~2]{leine2}), but we add a proof in terms of sweeping process (\ref{1}) for completeness.

\vskip0.2cm

\noindent {\it Proof of Theorem~\ref{thm11}}.
\noindent {\bf Step 1:} {\it Incremental stability}. Let $x_1$ and $x_2$ be solutions to (\ref{1}) with the initial conditions $x_1(t_0),x_2(t_0)\in C(t_0)$. Assuming that $t\ge t_0$ is such that both $\dot x_1(t)$ and $\dot x_2(t)$ exist and verify  (\ref{1}), one has
  	 \begin{equation*}
  	 \langle -\dot{x}_1(t) - f(t,x_1(t)), x_1(t) - x_2(t) \rangle \geq 0.
  	\end{equation*}
  	 Therefore $ \langle - f(t,x_1(t)), x_1(t) - x_2(t) \rangle \geq  \langle \dot{x}_1(t) , x_1(t) - x_2(t) \rangle $. By analogy, 
  	  $-\dot{x}_2(t) - f(t,x_2(t)) \in N_{C(t)}(x_2(t))$ implies  	 
  	 $
  	 \langle -\dot{x}_2(t) , x_1(t) - x_2(t) \rangle \leq \langle  f(t,x_2(t)), x_1(t) - x_2(t) \rangle
  	 $   
  	 Therefore,
  	 \begin{align*}
  	 \frac{d}{dt} \lVert x_1(t)-x_2(t) \lVert ^2 
  	 &= 2 \langle \dot{x}_1(t) - \dot{x}_2(t) , x_1(t)-x_2(t)\rangle \\
  	 &= 2 \langle \dot{x}_1(t), x_1(t)-x_2(t) \rangle -2 \langle
  	 \dot{x}_2(t) , x_1(t)-x_2(t) \rangle \\
  	& \leq -2 \langle  f(t,x_1(t)) , x_1(t)-x_2(t)\rangle + 2 \langle f(t,x_2(t)), x_1(t)-x_2(t) \rangle \\ 
  	&= -2\langle f(t,x_1(t))-f(t,x_2(t)), x_1(t)-x_2(t)\rangle\\
  	& \leq -2 \alpha \lVert x_1(t)-x_2(t) \lVert^2
  	 \end{align*}
  	and by Gronwall-Bellman lemma (see Lemma~\ref{trub} in the Appendix), 
$  	\lVert x_1(t)-x_2(t) \lVert ^2 \leq e^{-2\alpha(t-t_0)} \lVert x_1(t_0)-x_2(t_0) \lVert^2,$ for a.a. $t\ge t_0.$
Since both $x_1$ and $x_2$ are continuous functions,
  	\begin{equation}\label{Gro}
  	\lVert x_1(t)-x_2(t) \lVert ^2 \leq e^{-2\alpha(t-t_0)} \lVert x_1(t_0)-x_2(t_0) \lVert^2,\quad\mbox{for all }t\ge t_0.
  	\end{equation} 

\vskip0.2cm

\noindent {\bf Step 2.} {\it Uniqueness of the bounded solution $x_0$}.  Let $v$ be another bounded solution of (\ref{1}) defined on the entire $\mathbb{R}.$ Then, given any $\tau\in\mathbb{R},$ the inequality (\ref{Gro}) yields
\begin{equation*}
  	\lVert x_0(t)-v(t) \lVert ^2 \leq e^{-2\alpha(t-\tau)} \lVert x_0(\tau) - v(\tau) \lVert^2,\quad \text{ for all } t\geq \tau.
  	\end{equation*} 
  	Thus $\lVert x_0(t)-v(t) \lVert  \leq 2 M e^{-\alpha(t-\tau)}$, for all $t\geq \tau$, where $M$ is as defined in (\ref{M}). Now we fix $t\in\mathbb{R}$ and pass to the limit as $\tau\to-\infty,$ obtaining   $ \lVert u(t) - v(t) \lVert^2\le 0.$ Thus $u(t)=v(t)$ for all  $t\in \mathbb{R}$.

\vskip0.2cm

\noindent {\bf Step 3.} {\it Global exponential stability of $x_0$} follows from  (\ref{Gro}). Indeed, (\ref{Gro}) implies that 
$\lVert x_0(t)-v(t) \lVert  \leq e^{-\alpha(t-\tau)} \lVert x_0(\tau) - v(\tau) \lVert $, for any solution $v$ of (\ref{1}) and for any $t\ge\tau$. 	
\qed

\section{Acknowledgements.}  The work of the first author is supported by the Ministry of Education and Science of the Russian Federation in the frameworks of the project part of the state work quota (Project No 1.3464.2017) and by RFBR grant 16-01-00386. The second author acknowledges support by NSF Grant CMMI-1436856.

\bibliographystyle{plain}

\section{References}

\end{document}